\documentclass{amsart}
\usepackage[utf8]{inputenc}
\usepackage[letterpaper,margin=1in]{geometry}
\usepackage{hyperref}
\usepackage{amsthm}
\usepackage{graphicx}
\usepackage{amsmath}
\usepackage{amssymb}
\usepackage{mathrsfs}
\usepackage{multicol}
\usepackage{mathtools}
\usepackage{hyperref}
\usepackage{hhline}
\usepackage{subcaption}
\usepackage{changepage}
\usepackage[english]{babel}
\usepackage[dvipsnames]{xcolor}
\usepackage{appendix}
\usepackage{pdflscape}
\usepackage{rotating}
\usepackage[ruled,vlined]{algorithm2e}


\hypersetup{
    colorlinks=true,
    linkcolor=blue,
    filecolor=magenta,
    urlcolor=blue,
    citecolor=blue
}
 
\urlstyle{same}

\makeatletter
\newtheorem*{rep@theorem}{\rep@title}
\newcommand{\newreptheorem}[2]{%
\newenvironment{rep#1}[1]{%
 \def\rep@title{#2 \ref{##1}}%
 \begin{rep@theorem}}%
 {\end{rep@theorem}}}
\makeatother

\newtheorem{theorem}{Theorem}[section]
\newtheorem{remark}[theorem]{Remark}
\newtheorem{lemma}[theorem]{Lemma}


\usepackage[foot]{amsaddr}

\newcommand{\allT}{T}
\newcommand{\allD}{\xi}
\newcommand{\pop}{\text{pop}}

\newcommand{\tree}{\tau}

\usepackage{tikz}

\tikzset{
    dot/.style 2 args={fill, circle, inner sep=2pt, label={#1:\scriptsize #2}}
}

\date{\today}
\begin{document}

\title[Multi-Scale Merge-Split Markov Chain Monte Carlo for Redistricting]{Multi-Scale Merge-Split Markov Chain Monte Carlo\\ for Redistricting}

\author{Eric A. Autry}

\author{Daniel Carter}
\address[Daniel Carter, Zach Hunter]{North Carolina School of Science and Mathematics, Durham NC}

\author{Gregory Herschlag}
\address[Eric A. Autry,Gregory Herschlag, Jonathan Mattingly]{Department of Mathematics, Duke University}
\email{gjh@math.duke.edu}

\author{Zach Hunter}

\author{Jonathan C. Mattingly}
\address[Jonathan Mattingly]{Department of Statistical Science, Duke University}
\email{jonm@math.duke.edu}

\begin{abstract}
  We develop a Multi-Scale Merge-Split Markov chain on redistricting plans. The chain is designed to be usable as the proposal in a Markov Chain Monte Carlo (MCMC) algorithm. Sampling the space of plans amounts to dividing a graph into a partition with a specified number of elements which each correspond to a different district. The districts satisfy a collection of hard constraints and the measure may be weighted with regard to a number of other criteria. The multi-scale algorithm is similar to our previously developed Merge-Split proposal, however, this algorithm provides improved scaling properties and may also be used to preserve nested communities of interest such as counties and precincts. Both works use a proposal which extends the ReCom algorithm which leveraged spanning trees merge and split districts. In this work we extend the state space so that each district is defined by a hierarchy of trees. In this sense, the proposal step in both algorithms can be seen as a ``Forest ReCom.'' We also expand the state space to include edges that link specified districts, which further improves the computational efficiency of our algorithm. The collection of plans sampled by the MCMC algorithm can serve as a baseline against which a particular plan of interest is compared. If a given plan has different racial or partisan qualities than what is typical of the collection of plans, the given plan may have been gerrymandered and is labeled as an outlier.  
\end{abstract}

\maketitle

Comparing a given redistricting plan to an ensemble of neutrally drawn plans is quickly becoming a standard method for identifying partisan and racial gerrymanders.  An ensemble of plans serves as a baseline against which a particular plan of interest is compared. If the given plan has different racial or partisan qualities than what is typical of the collection plans, the given plan may have been gerrymandered and labeled as an outlier. \footnote{For more discussion see \cite{QuantifyingGerrymanderingBlog}.}  This approach has been used by elected officials when considering remedial maps \cite{duchinPAreport} and has been successfully employed as evidence in a number of recent court cases \cite{RWCAvWakeBOE,GreensboroVGuilford,CovingtonvNC,GillVWhitford,LWVvPA,RuchoVCC,LewisVCommonCause}.  Methods for generating the ensembles are varied: There are constructive randomized algorithms including seed and flood and assimilation methods \cite{cirincione2000assessing,ChenRodden13,Chen15}, optimization algorithms \cite{mehrotra1998optimization,Liu16}, moving boundary MCMC algorithms \cite{macmillan2001redistricting,MattinglyVaughn2014,QuantifyingGerrymandering,Wu15,fifield2015,jcmReport}, local chain comparison algorithms \cite{Chikina_Frieze_Pegden_2017,chikina2019separating}, and also tree based methods that rearranges pairs of districts by cutting spanning trees \cite{moonVa, deford2019redistricting,DeFord2018,DeFordDuchinPrivite,deford2019recombination,carter2019optimal,CannonConference2020}.  All of the tree based methods build on the initial work of ReCom \cite{deford2019redistricting}.

It is conjectured that tree based methods may be able to mix quickly due the relatively global changes they make; in contrast methods such as moving boundary or single node flip methods \cite{MattinglyVaughn2014, fifield2015} may suffer from the fact that paths between acceptable redistricting plans may have to pass large energetic or entropic barriers \cite{njatDedfordSolomon2019graphs}. Tree based methods have also been made reversible while sampling from a given measure making them a promising path in sampling problems on redistricting plans \cite{carter2019optimal,CannonConference2020}.

In general, a redistricting plan may be thought of as a maps from nodes on a graph to an assigned district; the nodes of the graph represent political regions such as census blocks, precincts or counties within a region of interest. Due to common political criteria, the induced subgraph of the nodes assigned to a district is simply connected, meaning that we may think of the districting plan as a partition of the graph.

It is, however, at times useful to extend the state space to a higher dimensional space. For example, the primary idea in \cite{carter2019merge} is to expand state space from a partition to be a spanning forest. Such an extension makes computing reversibility feasible (see e.g. \cite{deford2019recombination}).

Despite the benefits of tree based algorithms, such methods have not yet been shown to be able to preserve larger geographical units. Many state redistricting guidelines call for the preservation of communities of interest. These communities include counties, precincts, municipalities, Native American reservations and neighborhoods. We have previously developed techniques that provide soft energetic constraints on these elements \cite{MattinglyVaughn2014,Bangia17,herschlag2017evaluating,Herschlag20,jcmReport}. These methods rely on small changes at the boundary that accumulate into larger scale moves. On large scale problems, these methods do not always sample from a given distribution and also may not provide strict control over the number of split counties: As an example, the 2016 NC congressional remedial redistricting plan split 13 counties, and the 2019 plan split 12;\footnote{Given the extremely tight population constraints on congressional districts, it is reasonable to assume that there is no subset of counties that perfectly can accomodate a subset of the congressional districts. Under this assumption our previous work may be used to demonstrate that 12 county splits is optimal \cite{carter2019optimal}.} when using soft constraints, however, we previously constructed an ensemble that split, in median, 34 counties \cite{Herschlag20}.

In addition, tree based methods have polynomial complexity for each proposal step due to the need to use Kirchoff's theorem to compute the number of trees on a graph. Perhaps due to this complexity, there has been little work on sampling the space of redistricting plans through Markov chains that extends to the level of census blocks. 

In the current work, we employ a hierarchy of quotient graphs (i.e. a hierarchy of nested partitions) in a multi-scale framework. We will demonstrate the possibility for logarithmic, rather than polynomial, complexity which promises to yield samples of redistricting plans fully resolved at the finest levels.
This reduction is primarily due to employing Kirchoff's theorem on fixed size partitions, because as the size of the graph grows one may simply increase the number of levels in the hierarchy (see Section~\ref{sssec:complexHT}).
This hierarchy may reflect higher level redistricting features such as precincts and counties, or alternatively, the hierarchical structure may itself evolve and be decoupled from political boundaries. Nodes within each level of the hierarchy are mostly preserved, meaning that we may preserve such higher level features. By coupling the hierarchical space to our previous work in \cite{carter2019merge}, we are able to make large changes to district boundaries at each step with a scheme that is both able to preserve geographic regions of interest and is computationally efficient.

Fundamentally, the multi-scale framework is equivalent to placing a novel measure the spanning forest while modifying proposal matrix to focus on plans that preserve hierarchical structures (and that may, on occasion, update the hierarchical structures).
In addition to the algorithmic advances provided by the multi-scale framework, we further expand the state space to consider edges between districts. Similar to the expansion to a spanning forest, such edges will greatly simplify computing the acceptance ratio when applying the Metropolis-Hastings algorithm to our proposal chain.

In developing the algorithm, we demonstrate its capabilities on the precincts embedded within the counties of North Carolina on the 13 congressional districts.  We implement a method that splits 13 or fewer counties and find evidence of convergence on observables of interest across 10 independent and randomly seeded Markov chains.

\subsection*{Overview: } In the next section we give a high level overview of the Multi-Scale Merge-Split Algorithm. In Section~\ref{sec:TargetMeaasure}, we describe the family of target measure which we use the Multi-Scale Merge-Split proposal to sample from.
We also describe the general setting in which we will work and define the space of Hierarchical Trees and the idea of Linked edges, both of which are central to the paper. In Section~\ref{sssec:complexHT}, we discuss the computational complexity of the algorithm. In Section~\ref{Sec:StructreuMeasure}, we collect some observations on the structure of the target measure which will be useful in investigating the algorithm. In Section~\ref{sec:samplingFromP}, we describe the general Metropolis-Hastings scheme used in our algorithm. In Section~\ref{sec:mergeSplitQ}, we layout our proposal algorithm, step-by-step, giving some details of the implementation. More details on the implementation are given in Section~\ref{sec:implement}. Section~\ref{sec:NumericalResults} we give some initial numerical results. We close the paper with a discussion and two appendices which contain some details on how to evolve the multi-scale hierarchies dynamically and some computational issues the bookkeeping of linked edges.

\section{Informal Overview of the Reversible Multi-Scale Merge-Split Algorithm}
We describe the algorithm in the context of political redistricting as that is our main application of interest. However, at heart, the algorithm is a multi-level/multi-scale graph partitioning algorithm.
Typically, political districts are formed out of atomic geographic elements such census blocks, precincts, or counties. A redistricting is simply an assignment of each of these smallest atomic elements to a district (see Figure~\ref{sfig:overviewAssignment}). Since districts are typically required to be connected, we define each district as a connected partition of the atomic elements. 

In many redistricting applications, certain higher level features, like counties, are preserved when possible. Each atomic unit may belong to a number of higher level features: For example, a census block is contained within a precinct which is contained within a county. Often these descriptive levels form nested hierarchies (e.g. precincts are contained within counties). When describing a districting graph, officials will often list nodes corresponding to the coarsest descriptions that are fully assigned to a single district, and recursively list finer level district assignments (see Figure~\ref{sfig:overviewMixedAssignment} and e.g. \cite{HB1029}). 

Alternatively, one can invent nested hierarchies in order to induce a multi-scale, and compressed, districting description. These invented hierarchies can evolve as part of the algorithm (see Section~\ref{apdx:DynamicHierarchies}). In this note, we primarily describe the case where the hierarchy is kept constant, since evolving the hierarchy is a layer which can be built on top of a fixed hierarchy algorithm as described in this paper.

\begin{figure}
\centering
\subcaptionbox{Precinct Assignment\label{sfig:overviewAssignment}}{\includegraphics[width=0.45\linewidth, clip = true, trim = {0cm 5cm 0cm 3cm}]{./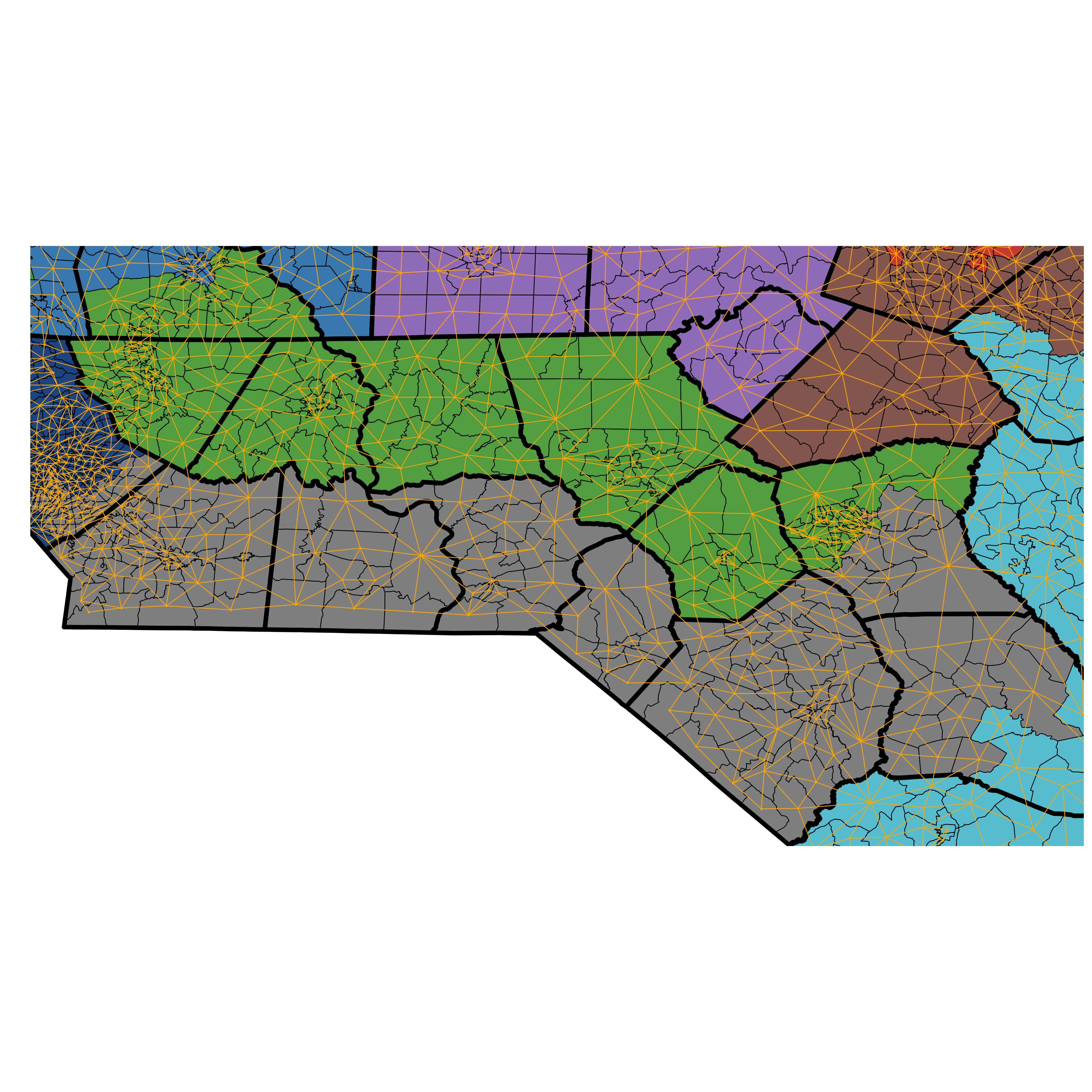}}\qquad
\subcaptionbox{Mixed Level Assignment\label{sfig:overviewMixedAssignment}}{\includegraphics[width=0.45\linewidth, clip = true, trim = {0cm 5cm 0cm 3cm}]{./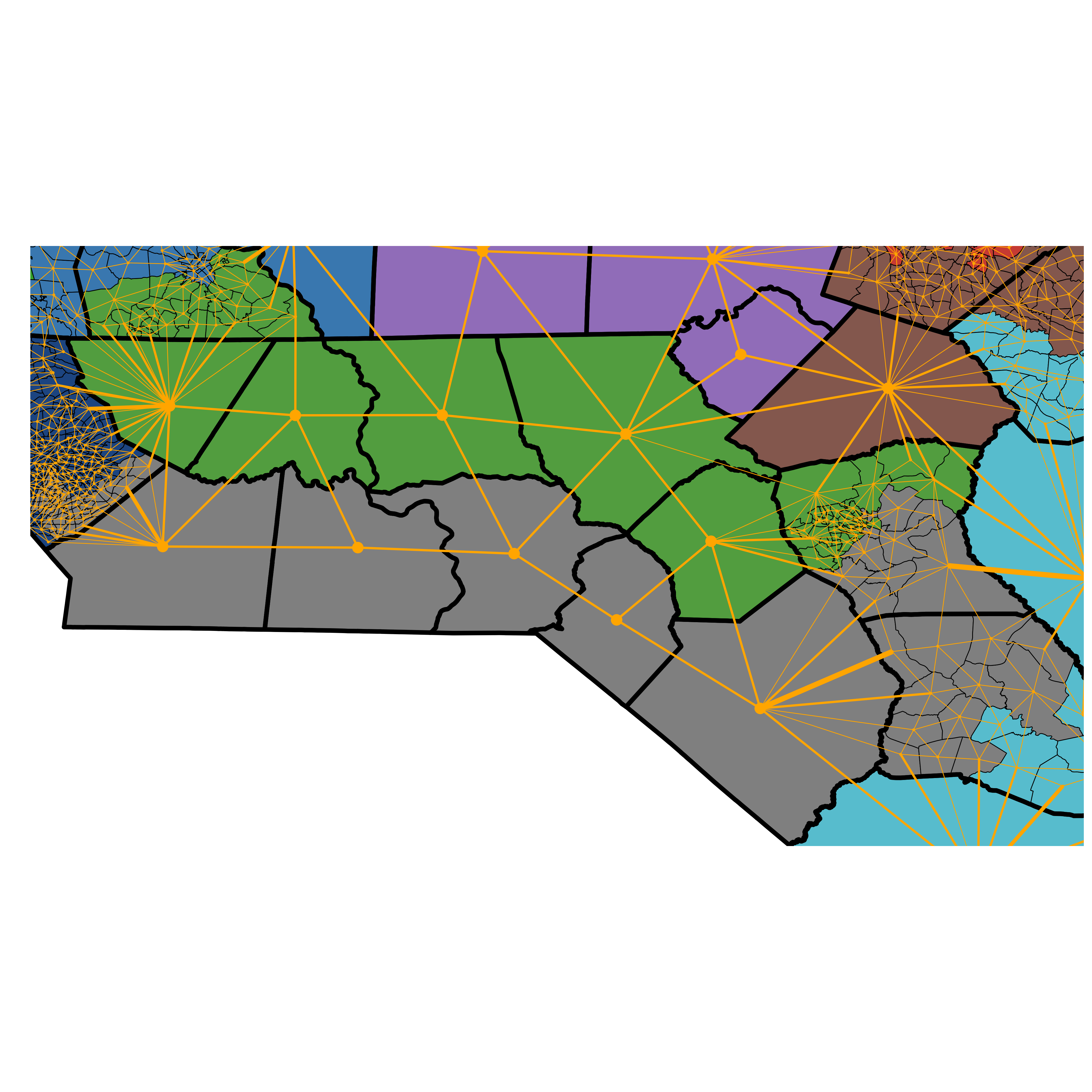}}\qquad
\caption{We display a district assignment at the precinct level (A); counties are shown with thick black lines. The precinct graph is shown in orange. We also show a mixed-level description of the district assignment (B); counties that are preserved are collapsed to single nodes and edges are weighted (shown by thickness) based on how many adjacent precincts link two nodes.}
\label{fig:overviewAssignment}
\end{figure}

\begin{figure}
\centering
\subcaptionbox{New top-level tree on merged districts \label{sfig:topLevelTree}}{\includegraphics[width=0.45\linewidth, clip = true, trim = {0cm 5cm 0cm 3cm}]{./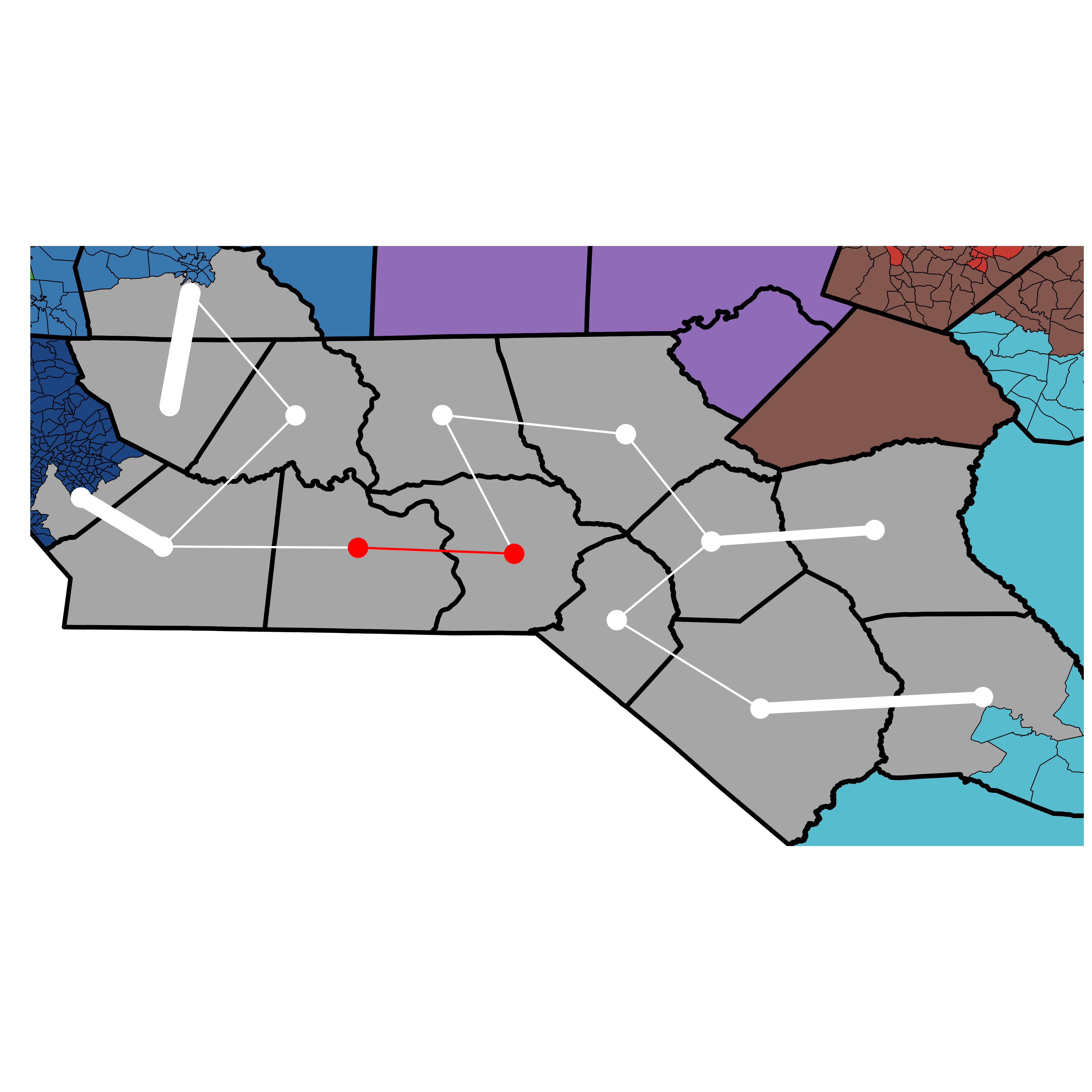}}\qquad
\subcaptionbox{Expanded multi-level tree \label{sfig:expandedTree}}{\includegraphics[width=0.45\linewidth, clip = true, trim = {0cm 5cm 0cm 3cm}]{./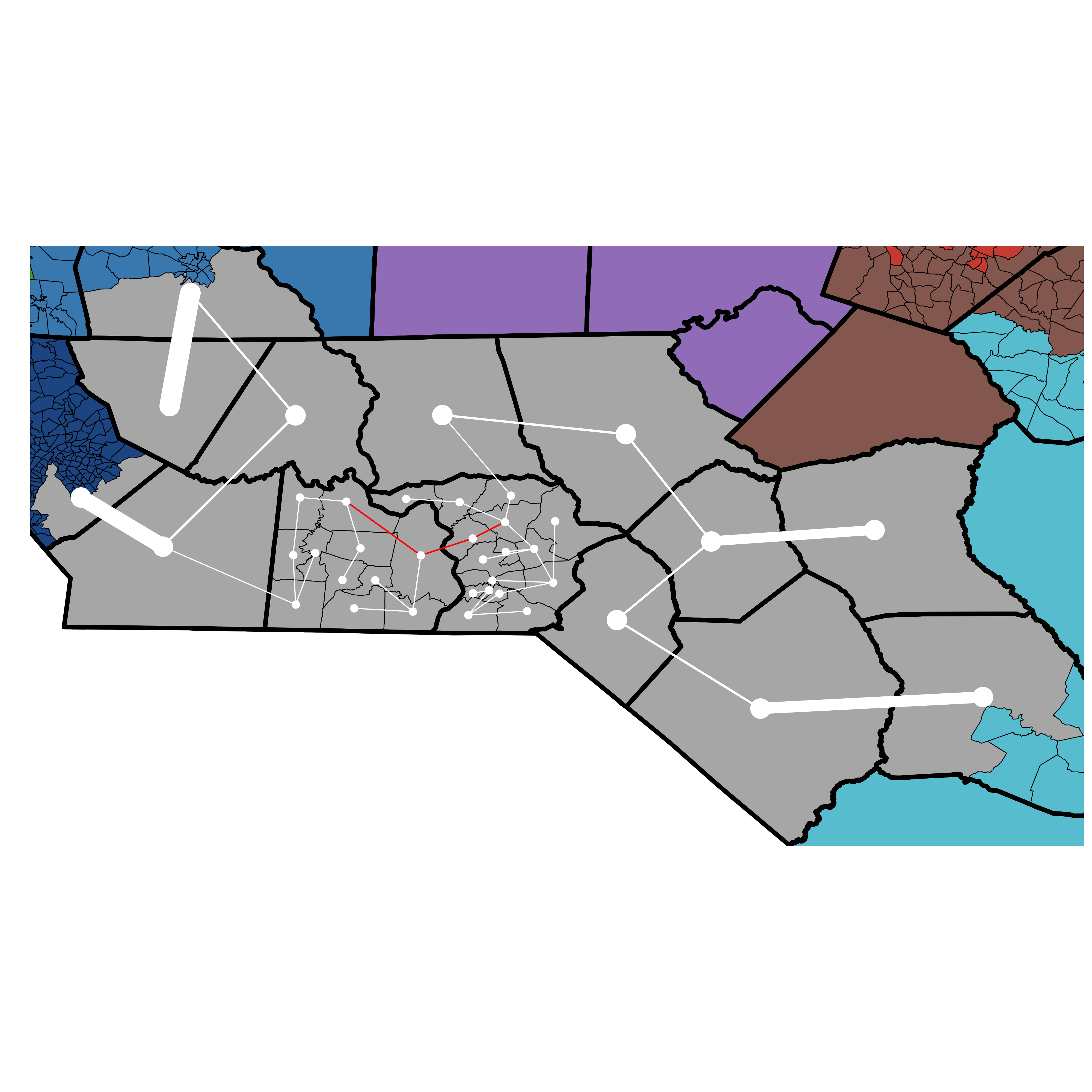}}\qquad
\subcaptionbox{New forest \label{sfig:newForest}}{\includegraphics[width=0.45\linewidth, clip = true, trim = {0cm 5cm 0cm 3cm}]{./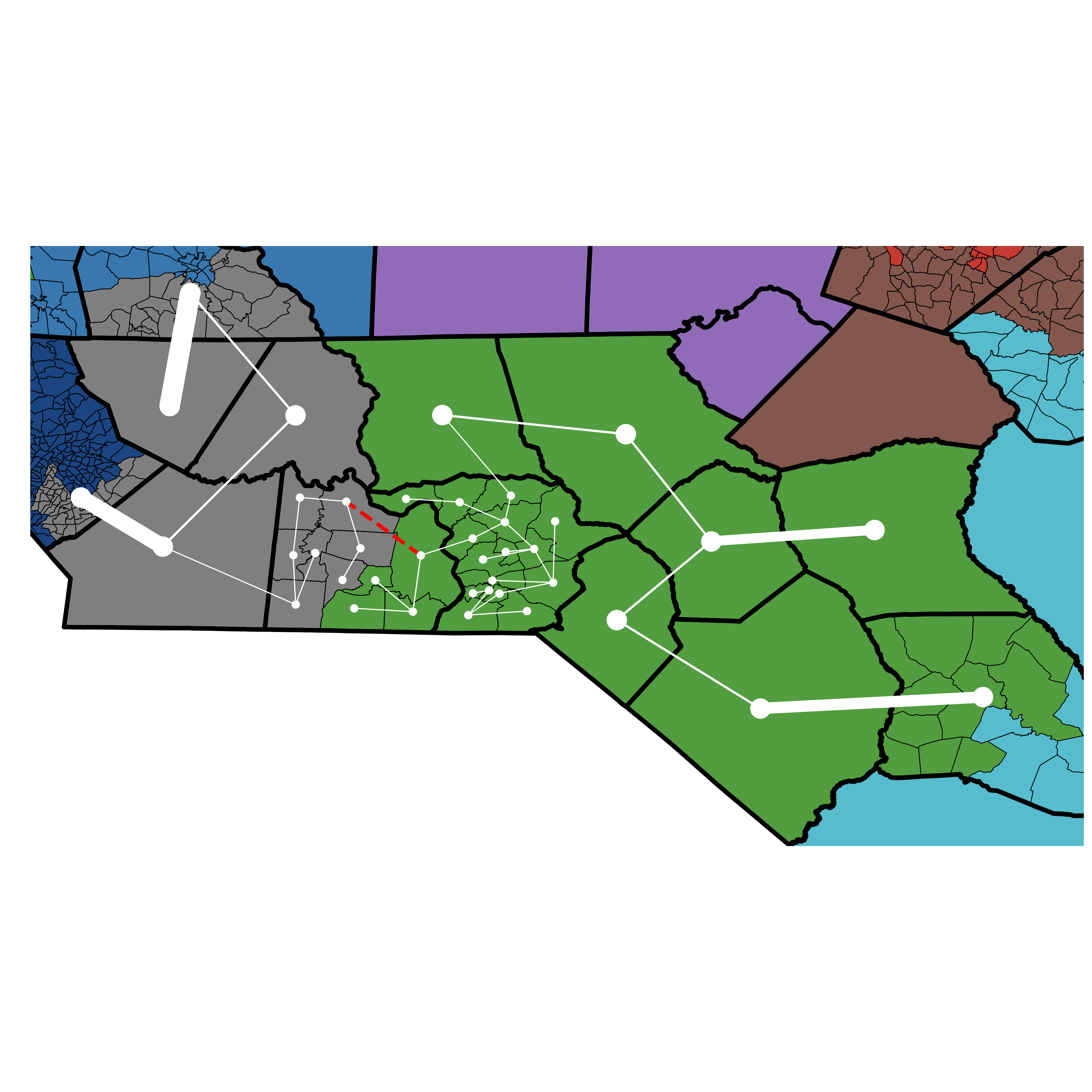}}\qquad
\caption{We display a merged district and a possible spanning tree at the top level (A); this tree shows both nodes and edges that may be cut in red. The nodes that might be cut are expanded and the edges across counties are randomly specified according to the edge weights (B); in the expanded graph we show the edges that may be cut to maintain acceptable population parity. A random edge is cut and the two new districts are specified (C); the trees persist as part of the new state, as does the cut edge.}
\label{fig:overviewProposal}
\end{figure}

We represent the nested features as graphs, with nodes corresponding to geographical units, and edges corresponding to adjacent units (see Figure~\ref{sfig:overviewAssignment}). When examining a graph with preserved larger scale features, we quotient out (or coarsen) these larger scale features in the graph (see Figure~\ref{sfig:overviewMixedAssignment}). We weight the edges between coarse nodes based on how many edges at the finest level span the coarser nodes.

We will use a tree based algorithm to sample the space of plans.  Tree based sampling algorithms for districting plans began with the ReCom \cite{deford2019recombination} algorithm which merges two adjacent districts, draws a spanning tree on the merged districts, and looks for edges that, if removed, would yield two sub-trees that each contain an acceptable population. The Merge-Split algorithm \cite{carter2019merge} and reversible ReCom algorithm \cite{CannonConference2020} build on this idea in different ways with the common goal of making explicit the invariant measure that is being sampled. The distinguishing feature of the Merge-Split algorithm is that it ustulizes a proposal Markov chain on a collection of trees, each spanning a district, in a Metropolis-Hastings scheme. In this way, the proposal step of Merge-Split might be reasonably called ``forest ReCom.''  In contrast, reversible ReCom sets a maximum number of allowable cut edges and evolves on a state space in which the districts are defined as a partition of the graph.  All three algorithms calculate spanning trees on merged districts, but ReCom algorithms discard the tree after splitting. In Merge-Split the spanning forest is retained which provides us with an efficient method to compute the forward and backward transition probabilities in a Metropolis-Hastings scheme. This allows us to sample from a specified target distribution which is often important in practical settings (reversible ReCom also has this property). These algorithms can also be mixed with other sampling techniques in efforts to improve exploration of the space and mixing.

In this work, we keep the spanning forest as the state space of the Markov Chain; we will further expand the state space to further simplify the calculation of the forward and backward probability. Although the primary idea of the Multi-Scale Merge-Split algorithm is similar to the Merge-Split algorithm, we first draw a tree at the coarsest level of the quotient graph established by the induced subgraph of the two merged districts (see Figure~\ref{sfig:topLevelTree}). We then look for edges that could be removed or nodes that could be ``cut'' to lead to acceptable population differences between the resulting trees. For the nodes that can be cut, we expand the graph at the finer level, draw a tree within the expanded node and again look for removable edges or cuttable nodes (see Figure~\ref{sfig:expandedTree}). 
We only reveal finer scale structure, as needed.
When the finer scale is needed it can be drawn since its distribution is determined by local information on the coarse scales above.  
Such as expanding a coarse edge or node.
Once all cuttable nodes have been expanded, we have a list of edges that can be removed while preserving acceptable population balance, and we choose one to create the new districts (see Figure~\ref{sfig:newForest}).

Merging and splitting will result in a new spanning forest where two the the trees have been updated. We will see below how this allows us to computationally track the acceptance ratio when applying the Metropolis-Hastings algorithm to the proposal. ``Coarse nodes, even those contained within split counties, do not immediately need to be specified upon a Merge-Split step (see Figure~\ref{sfig:newForest}). Instead we will assume that there is a random tree within each coarse node and a random edge chosen where nodes are coarser than the finest level. One of the benefits of this method is that we do not need to immediately specify which tree or edge is chosen, only the distribution of unspecified edges and trees. This means that we can maintain a multi-scale representation of the tree that will save computational time (i) in sampling new trees, (ii) in counting the number of trees, and (iii) in allowing for a compressed description of the districting graph when writing to disc. 

In addition to keeping the hierarchical forest, we will further expand the state space by keeping track of which edge we removed (shown as the dashed red line in Figure~\ref{sfig:newForest}); we will call these edges \emph{linking edges}. We will see how keeping track of such edges will further simplify the computational complexity. This has two effects. It can reduce the amount of nodes that need to be expanded to finer scales thereby reducing the computation complexity; it also further reduces the calculation of the forward and backward probability. The state space is then defined to be a hierarchical spanning forest combined with a set of linking edges.

\section{The Setting and Target Measure}\label{sec:TargetMeaasure}
We will now lay the groundwork needed to more formally define the algorithm sketched in the previous section.
Let a graph $G_0$ have vertices $V_0$ and edges $E_0$. Each vertex will represent some geographic region to be assigned to a district plan -- this could be a voter tabulation district (VTD), precinct, census block, county, etc. In this context, edges are placed between vertices that are either rook, queen, or legally adjacent.\footnote{Rook adjacency means that the geographical boundary between two regions has non-zero length; queen adjacency means that the boundaries touch, but may do so at a point. At times two regions may not be geographically adjacent, but may be considered adjacent for legal purposes; for example, an island may still be considered adjacent to regions on a mainland for the purposes of making districts.}
Furthermore, in this context, we will be working with planar graphs, though all of the ideas we will discuss may be trivially expanded to generic graphs.

We may always represent a districting plan on $G_0$, made up of $d$ districts, as a function $\xi:V \to \{1,2 \dots d\}$. Informally, $\xi(v)=i$ means $v$ is in the $i$th district. Given a districting plan $\xi$, we will denote by 
$V_i(\xi) = \{v \in V \mid \xi(v) = i\}$ and $E_i(\xi) = \{(v,u) \in E \mid \xi(v) = \xi(u) = i\}$ as the set of vertices in the $i$th district and the set of edges between vertices in the $i$th district, respectively. We will define $\xi_i = (V_i(\xi), E_i(\xi))$ to be the subgraph induced by the $i$th district. More generally, given a graph $G = (V_G, E_G)$ with vertices $V_G$ and edges $E_G$, we will define $V(G) = V_G$ and $E(G) = E_G$ as maps from a graph to a vertex or edge set, respectively. 

We will also sometimes associate extra data with the vertices and edges, such as population, land area, and border length. The additional data is used to evaluate the districts on desired redistricting criteria, such as equal-population and compactness. Of particular note, we define $\pop(v)$ to be the population of vertex $v$ and
\begin{align}
\pop(\xi_i) = \sum_{v\in V_i(\xi)} \pop(v).
\end{align}
to be the population of district $\xi_i$. Each vertex of $G_0$ may also be associated with a graph partition. This partition may either represent arbitrary sets of vertices or represent larger geographical units. For example, a vertex representing a census block has corresponding data of the precinct and county in which it resides.

As in the original Merge-Split algorithm \cite{carter2019optimal}, we begin by expanding our state space to be the set of $d$-\textit{tree partitions} of the atomic graph, i.e. the space of forests on $G_0$ consisting of $d$ disjoint trees. We will use the term  \textit{spanning forest} interchangeably for such a collection of disjoint trees which span the graph. From this perspective the state space has elements of the form
\begin{align*}
 \allT = \{T_1,T_2,\cdots,T_d\},
\end{align*}
where each $T_i$ is a spanning tree on the subgraph $\xi_i$ with vertices $v_i=V_i(\xi)$ and edges $\varepsilon_i\subseteq E_i(\xi)$. Unlike the original Merge-Split algorithm, we will limit the space of allowable trees to a subset of the spanning trees that we will denote as \emph{hierarchical trees}. Hierarchical trees have the property that when they are projected to a collection coarser scales, they spanning trees. Limiting to the class of hierarchical trees will allow us both to preserve hierarchical partitions as well as provide favorable scaling the computational complexity of the algorithm. In the preserved hierarchical partitions, it will often suffice to know that there is some more finely resolved tree without knowing exactly what this tree is; therefore we will be able to coarsen the hierarchical tree in certain regions leading to a tree resolved over multiple scales.

The restricted tree space arises through a nested hierarchy of partitions. We will denote $H_0(G) = G$ as the the finest level graph in a hierarchy of graphs that we will construct inductively based on the nested partitions. We begin with a partition on $H_0$, denoted $\mathcal{P}_0$, and induce a coarser graph, $H_1$, which has one vertex corresponding to each element of the partition and edges for each edge on $H_0$ that connects two vertices in different partitions. In particular, we allow for multiple edges between vertices in $H_1$.  We then choose a partition of $H_1$, denoted $\mathcal{P}_1$, and repeat the process to obtain $H_2$. We continue iteratively until we have reached our desired top level, which we will denote $H_\ell$.
We will denote the hierarchy of graphs, $\{H_i\}_{i=0}^\ell$, as $\mathcal{H}$.  

At times we will want to apply this coarsening procedure to a subgraph of $H_i$. For this reason, we define a coarsening operator, $\mathcal{Q}$ which maps a finer graph to a coarser graph in the hierarchy; for example, for $H_i'\subset H_i$ $\mathcal{Q}^{j}(H_i') = H_{i+j}'\subset H_{i+j}$. Similarly, we define an expansion operator as the inverse of $\mathcal{Q}$, $\mathcal{Q}^{-1}$.  In Section~\ref{apdx:DynamicHierarchies}, we use multiple hierarchies of partitions that lead to different coarsening ladders of the base graph $G$.  If we have two hierarchies, $\mathcal{H}$ and $\mathcal{H}'$, each induces its own coarsening operator and we will distinguish them by $\mathcal{Q}_\mathcal{H}$ and $\mathcal{Q}_\mathcal{H'}$.  Moving to different hierarchies allows us to dynamically change our coarse units which is useful when they are not relevant to the redistricting problem and only introduced for computational efficiency.

Formally, when computing the number of edges between two coarse nodes at a given level, $u, v\in V(H_n)$, we let $w_n(u, v)$ be the number of edges linking the two nodes $u, v$.
We calculate the number of edges recursively by first setting 
\begin{align*}
  w_0(u, v)=
  \begin{cases}
    1 & \text{ if } (u,v) \in E(H_0)\\
   0 &\text{ if } (u,v) \not\in E(H_0)
  \end{cases}
\end{align*}
and then defining $w_n(u, v)$ by
\begin{align}
\label{eq:mgweight}
w_n(u, v) &= \sum_{(u', v')\in \mathcal{E}_n(u, v)} w_{n-1}(u', v')
\end{align}
where
\begin{align*}
\mathcal{E}_n(u, v)= \Big\{(u', v')\in E(H_{n-1}) \: | V(\mathcal{Q}(u')) = u \text{ and } V(\mathcal{Q}(v')) = v \Big\}\,.
\end{align*}

When considering the space of allowable trees on the induced subgraph of district $\xi_i$, we require that any of the recursive quotient graphs on $T_i$ be trees themselves with no more than a single edge connecting nodes in the recursive multi-graphs; this is to say that there is, at most, only one edge at the base level that spans partitions at any given level. Formally, we define $T_i$ to be a \emph{hierarchical tree} on $\mathcal{H}(\xi_i)$ which lives in the space of hierarchical trees, $HT$, defined as
\begin{align}
HT(\mathcal{H}(\xi_i)) \equiv \{t \in ST(\xi_i) \: | \: \forall n, \mathcal{Q}^n(t)\in ST(H_n(\xi_i))\},
\end{align}
where $ST(\cdot)$ is the set of spanning trees on the argument graph. We remark that for $t\in HT(\xi_i)$, and for all $(u,v)\in E(\mathcal{Q}^n(t))$, we have that $w_n(u,v) = 1$, since otherwise there would be a two node loop in the multi-graph on $\mathcal{Q}^n(t)$ which would contradict the requirement that $\mathcal{Q}^n(t) \in ST(H_n(\xi_i))$.

As shown in Figure~\ref{sfig:newForest} we will be looking for edges on hierarchical trees over merged districts that could be removed to yield two districts with acceptable populations. Previously, we did not keep track of the edges we removed when splitting merged trees to obtain two new spanning trees \cite{carter2019merge}, however in the current work we will see that is is useful to track these edges for computational ease. These are what we previously referred to as the linking edges, as they are edges that linked two trees when they were last split. We denote the set of linking edges $L$; note $(u,v)\in L$ implies that $\xi(u)\neq\xi(v)$. We also require that any two districts have at most one edge that links them. The linking edges further expanded state space to be 
\begin{align*}
 \allT \times L,
\end{align*}
or a combination of a hierarchical forest together with a collection of linking edges. 

\subsection{The target measure on hierarchical forests}
We will now place the probability measure on the expanded state space consisting of $d$ disjoint trees $\allT = \{T_1,T_2,\cdots,T_d\}$ and a set of edges linking some of these trees, $L$. We take our measure to be of the form
\begin{align}\label{eq:Pdef}
P(\allT, L) \propto e^{-\beta J(\xi(\allT))} \big(\tau_\mathcal{H}(\xi(\allT))\times \mathcal{L}(\xi(\allT))\big)^{-\gamma},
\end{align}
where $\xi(\allT)$ is the districting plan induced by the forest $\allT$, $J$ is a score function that evaluates how ``good'' a districting plan is,\footnote{Lower scores are ``better'' in the sense that a districting plan in question performs better when considering criteria included in the definition of $J$. } and $\beta\in[0,1]$ and $\gamma\in[0,1]$ are tempering parameters used to change the importance of the factors $J(\xi)$ and $(\tau_\mathcal{H}(\xi) \times \mathcal{L}(\xi))$, respectively. $\tau_\mathcal{H}(\xi(\allT))$ is a count of the number of hierarchical forests corresponding to $\xi(\allT)$ and $\mathcal{L}$ is the number of allowable linking edge arrangements corresponding to $\xi(\allT)$. 

$\tau_\mathcal{H}(\xi(\allT))$ can be represented as
\begin{align}
\tau_\mathcal{H}(\xi(\allT)) \equiv \prod_{i = 1}^d \tau_\mathcal{H}(\xi_i),
\end{align}
since the choice of tree in each partition is independent.

There are several choices for what linking edge sets are permissible and $\mathcal{L}$ will depend on this choice.  Below in Section~\ref{ssec:linkingEdgeSets} we describe two possible choices.  As an example, we may consider tracking a linking edge across all adjacent districts such that the linking edge is contained within a node at finest level that shares the two districts; if the districts do not share a node at any level then they are linked across the coarsest nodes.  In this case, we would count the number of possible linking edges for all adjacent districts and take the product of these independent choices to count the number of ways to assign linking edges to this partition.  In general, computing the number of allowable linking edge sets for a given partition $\xi$ typically reduces to the product over a (partial) boundary count over (possibly a subset of) adjacent districts.

\subsubsection{Counting hierarchical trees}
\label{sssec:countComplexHT}
We show how to count the number of hierarchical trees.  Below in Section~\ref{sssec:complexHT}, we show that using the multi-scale setting with pre-computation may lead to reducing the complexity of counting these trees from polynomial to logorithmic time.

\begin{theorem}[Hierarchical tree count]
\label{thrm:HTcount}
Given a graph $G$ with graph hierarchy $\mathcal{H}(G)$, the number of hierarchical trees in $HT(G)$ may be computed as
\begin{align}
\tau_{\mathcal{H}}(G) &= \tau(H_\ell(G)) \prod_{n = 1}^\ell \Bigg[\prod_{v \in V(H_n(G))} \tau(\mathcal{Q}^{-1}(v))\Bigg],
\label{eqn:HTcount}
\end{align}
where $\tau$ computes the number of spanning trees on an input multi-graph, and $\mathcal{Q}^{-1}$ maps the subgraph of a quotient graph to the corresponding expanded (or pre-quotient) subgraph with respect to the hierarchy.
\proof{ 
To prove the equation for the above count, we first remark that for each tree, $t\in HT(G)$, we have that $\mathcal{Q}^n(t)$ is a spanning tree on $H_n(G)$ with, at most, a single edge connecting any two nodes, by construction. We remark that there is a one-to-one correspondence between each edge of a multi-graph $H_n(G)$ and the base graph $G$ and so we may associate each edge in the multi-graph with an edge on the base graph (see Figure~\ref{fig:coarseEdgeToFineEdge}). We begin by counting the number of spanning trees on the coarsest multi-graph, $H_\ell(G)$, according Kirchoff's theorem; this is the first term in the product of \eqref{eqn:HTcount}. 

We next consider the resulting multi-graph at the next coarsest layer. Since the edges of the coarsest spanning tree were associated with edges at the base level $E(G)$, we may also map the edge from $E(G)$ to the associated edge in the corresponding multi-graph of the next coarsest level.   
To continue our construction in the space of hierarchical trees, note that any edge we add to the next finer level, $\ell-1$, cannot span coarse level partitions because we would add a loop: either (i) there is already an edge across the partitions of $H_{\ell-1}(G)$ which will introduce a loop of two nodes into the quotient graph, $H_\ell(G)$, and thus $w_\ell(e) > 1$ and the graph would no longer be in $HT(G)$ or (ii) if we added an edge across another partition, we would be adding an edge in tree on the non-multi-graph tree $H_\ell(G)$, a tree must have a fixed number of edges.

Therefore, all edges we add when constructing a tree on $H_{\ell-1}(G)$ must be within a partition at level $\ell-1$. Within a partition, we must add enough edges to connect the space, but also generate no loops, meaning that we must construct a spanning tree within each partition.  The spanning trees will be connected to the rest of the graph due to the edges we have already added at the coarsest level that will span partitions at the finer level.  We repeat drawing nested trees within partitions down to the finest level.

We may again use Kirchoff's theorem to count the number of spanning trees within each multi-graph corresponding to a partition at all levels.  The choice of spanning trees within a partition is entirely independent across partitions and of the coarsest tree, hence we take a product over the number of spanning trees for each multi-graph partition at the next coarsest along with the original choice of tree at the coarsest level. The product of spanning trees at the next coarsest level corresponds to the inner product on the right hand side of \eqref{eqn:HTcount} when $n = \ell$.  Continuing recursively through the hierarchy, we arrive at the full expression of \eqref{eqn:HTcount}, where the last term ($n = 1$) corresponds to trees on the subgraphs induced by the finest partition made on the base graph $G$.\qed
}
\end{theorem}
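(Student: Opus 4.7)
My plan is to establish a bijection between $HT(G)$ and the set of tuples $\big(\bar t,\, \{s_v\}\big)$ in which $\bar t \in ST(H_\ell(G))$ and, for each $n \in \{1,\ldots,\ell\}$ and each $v \in V(H_n(G))$, $s_v \in ST(\mathcal{Q}^{-1}(v))$. Once that bijection is in hand, Kirchhoff's matrix-tree theorem (the function $\tau$) counts each factor, the choices are independent because the underlying edge sets are pairwise disjoint, and multiplicativity gives exactly \eqref{eqn:HTcount}.

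For the forward direction, I would take $t \in HT(G)$ and set $\bar t := \mathcal{Q}^\ell(t)$, which lies in $ST(H_\ell(G))$ by definition of $HT$. For each intermediate $v \in V(H_n(G))$ I would collect the edges of $\mathcal{Q}^{n-1}(t)$ whose image under $\mathcal{Q}$ collapses to the single vertex $v$; these form a subgraph of the multi-graph $\mathcal{Q}^{-1}(v)$. They span $\mathcal{Q}^{-1}(v)$ because $\mathcal{Q}^{n-1}(t)$ is connected and any path between two vertices of $\mathcal{Q}^{-1}(v)$ that exits would produce two distinct edges incident to $v$ in $\mathcal{Q}^n(t)$ on a common cycle, contradicting the tree property at level $n$. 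They are acyclic because $\mathcal{Q}^{n-1}(t)$ is acyclic. Thus each $s_v$ is a bona fide spanning tree of $\mathcal{Q}^{-1}(v)$.

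The reverse direction is where I expect the main work. Given a tuple, I would use the one-to-one correspondence between edges of a multi-graph at level $\ell$ and edges of $G$ (noted in the excerpt) to lift each edge of $\bar t$ to a unique base-level edge, then union these lifted edges with every $s_v$ across all levels. The principal obstacle is to verify that the union actually lies in $HT(G)$, i.e.\ that it is a single tree on $G$ and that each projection $\mathcal{Q}^n$ lands in $ST(H_n(G))$ without parallel edges. I would establish this by induction on $n$ from coarse to fine: a hypothetical cycle at level $n-1$ either stays inside some $\mathcal{Q}^{-1}(v)$, contradicting that $s_v$ is a tree, or crosses between preimages, in which case it projects to a cycle at level $n$, contradicting the inductive spanning-tree property. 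Similarly, a pair of parallel lifted edges between two level-$n$ nodes would force $\mathcal{Q}^n$ of the construction to contain a two-cycle, violating the requirement $\mathcal{Q}^n(t) \in ST(H_n(G))$ imposed on hierarchical trees. Disjointness of the edge sets contributed by the various $s_v$ and by the lifted $\bar t$ then yields the claimed product form.
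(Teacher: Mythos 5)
Your proposal is correct and follows essentially the same decomposition as the paper: a hierarchical tree is determined by its coarsest projection together with an independent choice of spanning tree inside each partition element at each level, and the count follows by multiplicativity. The paper phrases this as a top-down recursive construction (arguing that any edge added at level $n-1$ must lie within a single partition, else it creates a cycle or a multi-edge at level $n$), whereas you package the same content as an explicit bijection with both directions verified — a slightly more careful presentation of the identical idea.
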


\begin{figure}
\centering
\subcaptionbox{Coarse tree ($\mathcal{Q}(T_i)$)\label{sfig:coarseTree}}{\includegraphics[width=0.45\linewidth, clip = true, trim = {28cm 12cm 0cm 4cm}]{./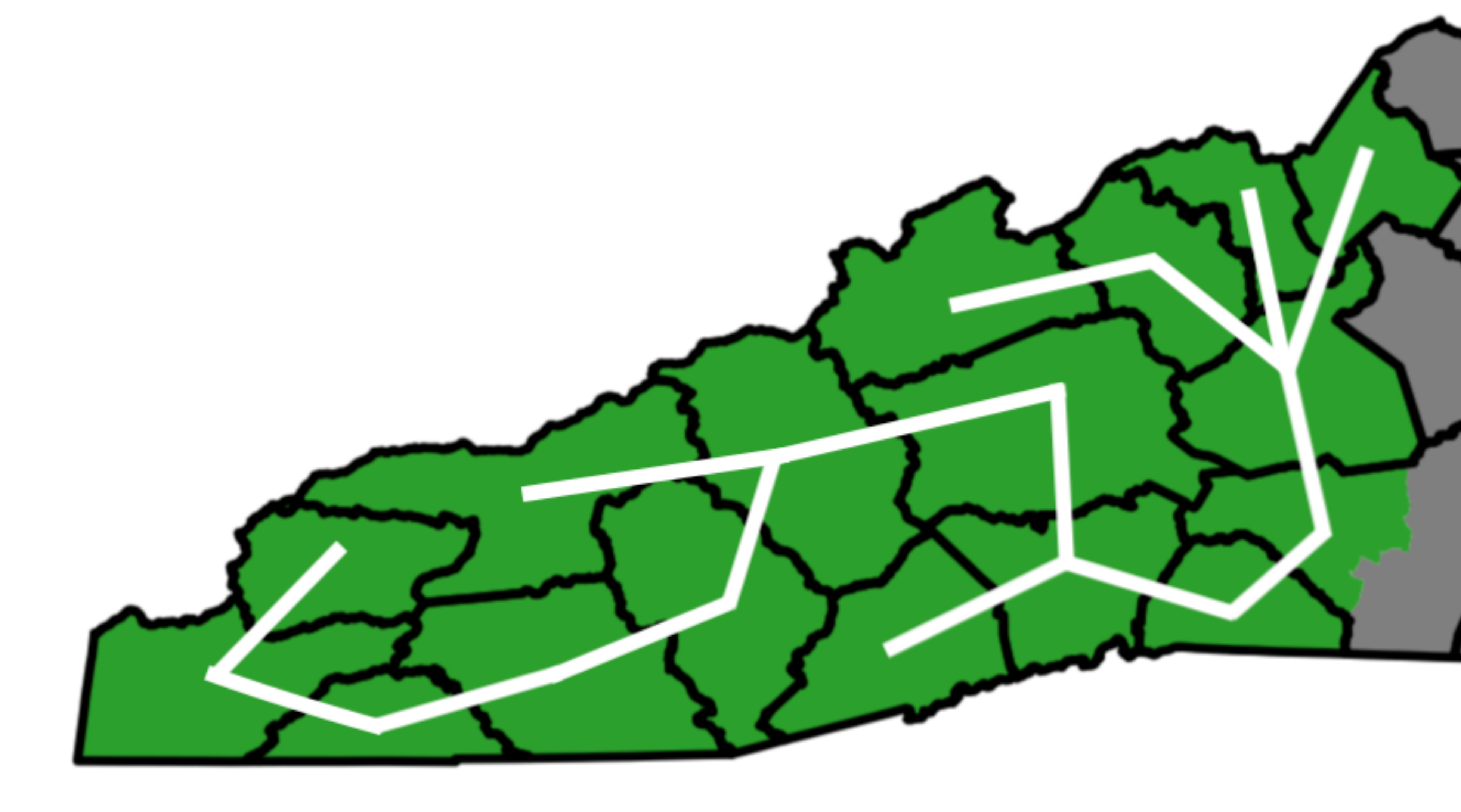}}\qquad
\subcaptionbox{Coarse edges at finer scale with tree on split node\label{sfig:expandedEdges}}{\includegraphics[width=0.45\linewidth, clip = true, trim = {28cm 12cm 0cm 4cm}]{./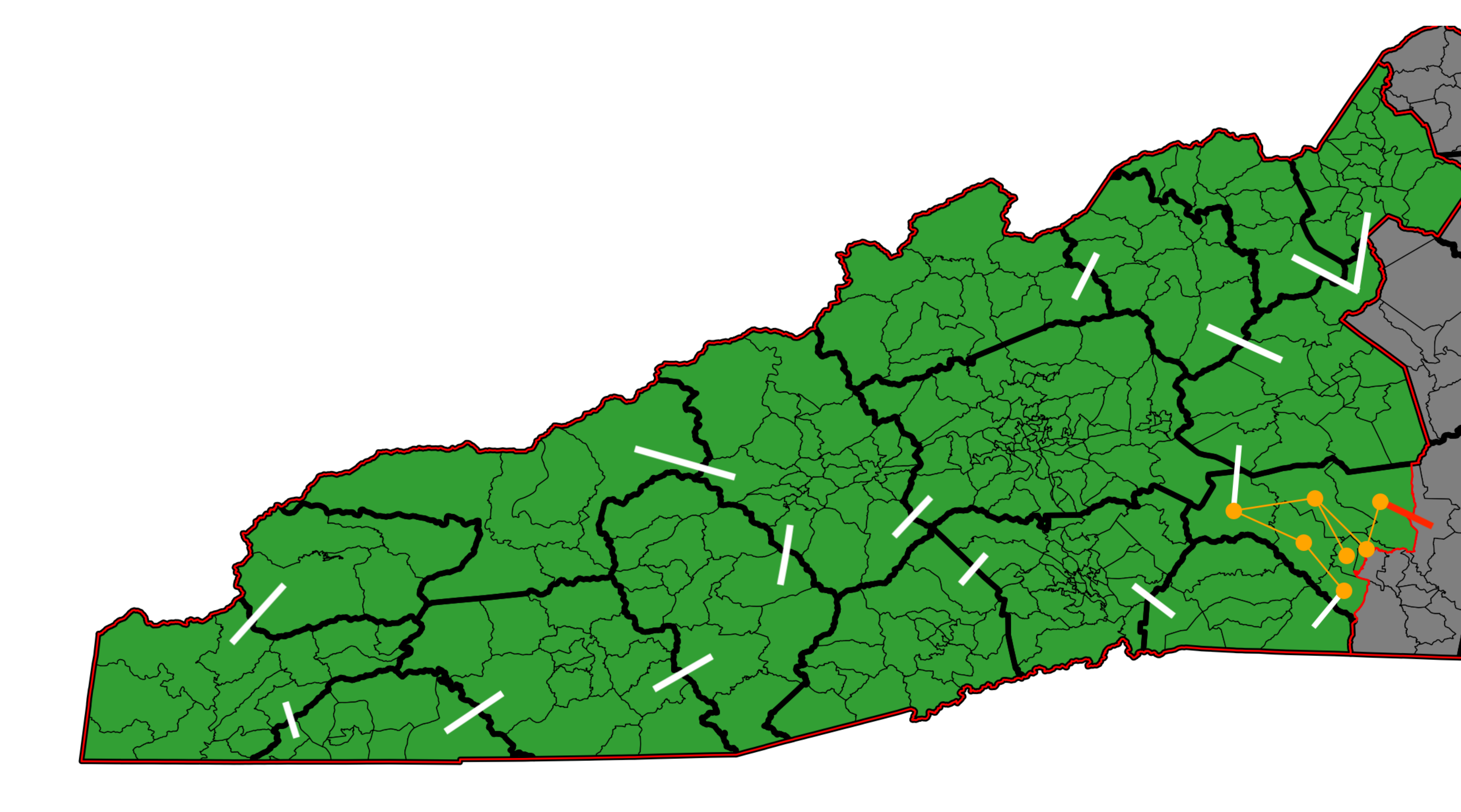}}\qquad
\caption{We display a coarse representation of a hierarchical spanning tree, $T_i$ on a green district \ref{sfig:coarseTree}.  These edges at the coarse scale are on a multi-graph that correspond to one of the edges at the finest scale \ref{sfig:expandedEdges}.  At the finer level, each expanded coarse node has an associated tree; an example is shown in the bottom right split node in orange. On un-split coarse nodes, it is often sufficient to know that there is \emph{some} tree at the finer level rather than knowing the particular tree; similarly, it can suffice to know that there is \emph{some} finer scale edge that links two coarse partitions without knowing the particular edge. The red edge shows the linking edge that spans between the green and gray districts.}
\label{fig:coarseEdgeToFineEdge}
\end{figure}

 \subsection{The structure of the measure}  \label{Sec:StructreuMeasure}
We collect a number of observations about the structure of the measure $P$ and various limiting cases in $\gamma$ and $\beta$.
We will write $P(\allT, L; \beta = b, \gamma = g)$ for the probability of seeing the districting $\allT$ in the distribution in \eqref{eq:Pdef} when $\beta=b$ and $\gamma=g$.

\subsubsection*{Uniform Measure on Hierarchical Forests.} When  $\gamma = 0$ and $\beta \rightarrow 0$, $P(\allT, L)$ converges to the uniform measure on the hierarchical forest of $d$ hierarchical trees which satisfy the constraints described by the score function $J$; that is to say $J(\xi) < \infty$. 

Keeping $\gamma=0$, if we were to set $\beta=0$ and use the convention that $0\times\infty=0$ in the exponent, the measure becomes
\begin{align}
P(\allT; \beta = 0, \gamma = 0) \propto 1,
\end{align}
which is to say we recover the uniform measure on the hierarchical forests, subject to no constraints.  

\subsubsection*{Uniform on All Graph Partitions} When $\gamma=1$ the distribution on graph partitions depends only on the factor involving $J$; that is, the probability of finding districting $\xi$ no longer depends on $\tau_\mathcal{H}(\xi)$. To see this, note that
\begin{align}
P(\xi) &\propto \sum_{L \in \mathcal{E}_L(\xi)} \sum_{\allT \in HT(\xi)} P(\allT, L) = e^{-\beta J(\xi)} \frac{\tree_{\mathcal{H}}(\xi) \mathcal{L}(\xi)}{\tree(\xi)_{\mathcal{H}}^\gamma \mathcal{L}(\xi)^\gamma},
\label{eqn:propxi}
\end{align}
where 
\begin{align}
HT(\xi) = HT(\xi_1)\times\cdots\times HT(\xi_n),
\end{align}
is the Cartesian product of all hierarchical trees, $HT(\xi_i)$, of subgraph $\xi_i$, and $\mathcal{E}_L$ is the set of allowable linking edge sets that correspond to the districting plan $\xi$ (note $\mathcal{L}(\xi)\equiv|\mathcal{E}_L(\xi)|$).

When $\gamma = 1$ and $\beta\rightarrow 0$, the measure becomes uniform on graph partitions subject to the absolute constraints given by the score function $J$. When $\gamma = 1$ and $\beta=0$ (as before, using the convention that $0\times\infty=0$), the measure is uniform on all graph partitions.

\subsubsection*{Intermediate Values of $\gamma$.} As can be seen in equation~\eqref{eqn:propxi}, as $\gamma$ becomes smaller, we favor partitions that have a larger product of tree counts on the districts.
In particular, when $\gamma = 0$ the chance of finding a districting plan with districts specified by $\xi$ is proportional to the product of the number of hierarchical trees on the subgraphs in $\xi$.  

The number of trees in any given partition may vary greatly.  To see this we note that the number of spanning trees grows faster than exponentially with the number of vertices in the graph, assuming the graph has average degree larger than 2 \cite{greenhillAverageNumberSpanning2017}, and that this provides an upper bound on the number of possible hierarchical trees. If the growth of the number of hierarchical trees relates to the rapid growth in the number of spanning trees, it may cause large disparates between the relative probabilities of different districting plans, as this ratio will be proportional to the product of hierarchical tree ratios
\begin{align}
\frac{P(\allD)}{P(\allD')} \propto \frac{\tree_{\mathcal{H}}(\allD)\mathcal{L}(\allD)}{\tree_{\mathcal{H}}(\allD') \mathcal{L}(\allD')} \frac{\tree_{\mathcal{H}}(\allD')^\gamma \mathcal{L}(\allD')^\gamma}{\tree_{\mathcal{H}}(\allD)^\gamma \mathcal{L}(\allD')^\gamma} e^{-\beta[ J(\xi)- J(\xi')]}.
\label{eqn:treeratio}
\end{align}
When taking a random walk through the state space using the Metropolis-Hastings algorithm, proposed states will usually have either far fewer or far more trees than the prior state, and the acceptance probability will be dominated by this ratio. This issue may be alleviated by modifying $\gamma$ in the interval $[0,1]$. There is a trade off; choosing $\gamma$ close to 0 leads to more similar probabilities and thus potentially better movement around the state space, but choosing $\gamma$ close to 1 leads to a distribution which is closer to uniform on the graph partitions rather than hierarchical forests.

\subsubsection*{Induced Measure on Partitions} We are primarily interested in the measure on partition $\xi$ of the graph as this maps to the redistricting application. However, for reasons we will make clearer in the discussion of the sampling algorithm, we have chosen to work on the extended state space of hierarchical forests.
It is instructive to pause and consider the relative structure of the measures on hierarchical forests and partitions. The following lemma shows that all forests which correspond to a given partition are equally likely to be sampled. In other words, the measure conditioned on a given partition is uniform on the hierarchical trees which correspond to that partition.

\begin{lemma}
  If two hierarchical forests $T$ and $T'$ and linking edge sets $L$ and $L'$ represent the same partition then their corresponding states have equal probability under the measure $P$. Given our notation, we may write that if $\xi(\allT)=\xi(\allT')$, then
  $P(\allT, L)=P(\allT', L')$.
\end{lemma}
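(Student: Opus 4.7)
The plan is to observe that the statement follows essentially by inspection from the defining formula \eqref{eq:Pdef}, so the proof amounts to unpacking that definition and noting that every factor on the right-hand side is a function of $\xi(\allT)$ alone rather than of the forest $\allT$ or the linking set $L$.

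Concretely, I would first recall that
\begin{align*}
P(\allT, L) \propto e^{-\beta J(\xi(\allT))} \bigl(\tau_{\mathcal{H}}(\xi(\allT)) \times \mathcal{L}(\xi(\allT))\bigr)^{-\gamma},
\end{align*}
and then argue factor by factor. The score factor $e^{-\beta J(\xi(\allT))}$ is by definition a function only of the induced partition, so it agrees for $\allT$ and $\allT'$ whenever $\xi(\allT)=\xi(\allT')$. Next, $\tau_{\mathcal{H}}(\xi(\allT))=\prod_i \tau_{\mathcal{H}}(\xi_i(\allT))$ is the \emph{count} of hierarchical forests compatible with the partition (this is exactly what Theorem~\ref{thrm:HTcount} computes), so it too depends only on $\xi(\allT)$ and not on which particular forest realizes it. Finally, $\mathcal{L}(\xi(\allT))=|\mathcal{E}_L(\xi(\allT))|$ is defined as the number of allowable linking-edge sets compatible with the partition, so it is likewise a function of the partition alone and independent of which $L\in\mathcal{E}_L(\xi(\allT))$ is chosen.

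Putting the three observations together, if $\xi(\allT)=\xi(\allT')$ then every factor in the expression for $P(\allT,L)$ coincides with the corresponding factor in $P(\allT',L')$, and so $P(\allT,L)=P(\allT',L')$. The only mild thing to verify along the way is that $L$ and $L'$ are indeed drawn from sets of equal cardinality, but this is immediate since the set $\mathcal{E}_L(\xi)$ is indexed by the partition $\xi$; there is no obstacle in the argument beyond confirming this bookkeeping. In particular, there is no combinatorial or counting step — the lemma is really a consistency check that the measure $P$ was defined as a pullback of a measure living on partitions (times a uniform spread over the fibres $\{(\allT,L):\xi(\allT)=\xi\}$), and the proof should make this pullback structure explicit, since it will be used repeatedly in the later Metropolis--Hastings analysis.
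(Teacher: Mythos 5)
Your proof is correct and follows essentially the same route as the paper's: both arguments simply observe that every factor in the defining formula for $P(\allT,L)$ depends on $\allT$ and $L$ only through the induced partition $\xi(\allT)$, so the result is immediate from the definition. The extra remarks about the pullback structure are consistent with the paper's surrounding discussion but add nothing beyond the paper's own two-sentence proof.
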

\begin{proof}
  This follows from the fact that for a given hierarchical forest $T$, the score function $J$, the number of hierarchical trees $\tau_\mathcal{H}$, and the number of allowable linking edge sets $\mathcal{L}$ only depend on the partition $\xi(T)$. Since these are the only occurrences of $T$ in the definition of $P$, the result follows.
\end{proof}

\subsection{A multi-scale approach the the hierarchical problem}

We have primarily discussed the hierarchical structure while mentioning the multi-scale structure the hierarchy affords. In this section, we emphasize and expound on the multi-scale approach.  In principle, the state space has been defined by a forest of hierarchical trees $T$ defined at the finest scale.  However, we do not need to specify certain trees before they are needed. It suffices to know that there is \emph{some} tree defined on certain fine scale partitions without knowing \emph{which} tree it is. 

As an example, consider Figure~\ref{sfig:newForest}, in which we specified trees on two of the high level county nodes that could have been cut, but we did not specify the new trees in any of the other counties. Conceptually, such a tree is specified, but any specification would lead to the same probability distributions and edge choices.  Furthermore, as seen above in Theorem~\ref{thrm:HTcount}, the specific tree within a coarse node is independently chosen from the coarse tree and other trees at the same level (see also Theorem~\ref{thm:HTdraw} below); we may therefore avoid sampling it initially.  If, later in the computation, we do need to specify this tree, we may uniformly sample the multi-graph within the node.  

Because we avoid specifying the full details of the hierarchical trees, our method can be said to span multiple scales, rather than simply being hierarchical from the atomic units.  Throughout the rest of the paper, we will use the concept of a particular hierarchical tree with that of a partially specified hierarchical tree in which some of the nodes have not yet been assigned a state.

This choices allows both for faster sampling of uniform hierarchical trees and faster computation of hierarchical tree counts when compared with drawing uniform spanning trees or counting spanning trees on the base graph.

\subsubsection{Estimating computational complexity}
\label{sssec:complexHT}
The two most expensive steps in the algorithm are computing the number of hierarchical trees \eqref{eqn:HTcount} and in drawing a new uniform tree on a graph (see \ref{thm:HTdraw} and \ref{fig:overviewProposal}).  In the non-hierarchical framework with a graph with $N$ nodes, the two steps have a polynomial complexity of $O(N^{2.373})$ and $O(N^2)$, respectively.

We now estimate the computational complexity of computing the number of hierarchical trees in an idealized setting, assuming that each partition at every level in the graph of interest has $m$ vertices and $N = m^{\ell+1}$. Counting the number of trees on a graph with $m$ nodes using Kirchoff's theorem requires taking the determinant of an $(m-1)\times(m-1)$ matrix which has a complexity of $O(m^{2.373})$ \cite{aho1974design}. The formula in \eqref{eqn:HTcount} would require counting $(\sum_{i=0}^{\ell} m^i) = O(m^\ell)$ spanning trees, each of size $m$, leading to a computational complexity of $O(m^{2.373 \ell}) = O(N^{2.373}$; this is the same complexity associated with computing the number of spanning trees on the entire graph. 

In the multi-scale setting, however, we can significantly improve on this.  First, we precompute the number of trees for each partition, but do so only once (with the standard $O(N^{2.373})$ complexity).  When we draw a new forest (as in \ref{fig:overviewProposal}), we will, at most, split one new node per level per district.  The spanning tree counts of the remaining nodes on each district will have already been computed and stored.  This means that we must only compute two spanning tree counts per split node per level, which leads to a complexity of $O(\ell m^{2.373})$.

The relative complexities of computing the number of hierarchical trees depend on $m$ and $\ell$. However, in a multi-level framework on an arbitrary hierarchy, it may be reasonable to fix $m$ and let $\ell$ vary to change the overall size of the graph, $N$.  In this case we find that the complexity of the hierarchical tree count grows like $O(\log(N))$ which is significantly faster than the overall tree count complexity of $O(N^{2.373})$.

In terms of the complexity of drawing a new hierarchical tree we show that uniformly sampling hierarchical trees may be done by drawing a spanning tree at the coarsest level, $\mathcal{Q}^{\ell}(G)$ and then drawing a spanning tree within each partition from levels $\ell-1$ to 1  (see \ref{thm:HTdraw}).  We will only need to draw a tree on $O(1)$ nodes at each level when determining where to split the graph, meaning that this algorithm will have a complexity of $O(\ell m^{2})$ in the multi-scale framework.  Again, using the assumption that we can fix $m$ and vary $\ell$, the complexity of this algorithm will scale as $O(\log(N))$.
  
\section{Sampling From The Measure \texorpdfstring{$P$}{P}}
\label{sec:samplingFromP}

As already discussed, we will use a global Multi-Scale Merge-Split algorithm to propose moves to the standard Metropolis-Hastings algorithm. Our proposal is not itself reversible, but the resulting Markov chain given by the Metropolis-Hastings algorithm will be. Although one can always in theory use Metropolis-Hastings to create a reversible chain from any proposal method, it will fail to do so in practice if the rejection probabilities are too large or if calculating the necessary transition probabilities is computationally infeasible.

In the next section, we review the Metropolis-Hastings algorithm in this setting. In Section~\ref{sec:MS}, we give a full description of our Multi-Scale Merge-Split algorithm and many of the implementation details. We also further explain what is gained computationally by working on the space of hierarchical forests resolved over multiple scales as well as how employing the linking edge set expansion eases computational burdens.

\subsection{Metropolis-Hastings algorithm}

To sample from the measure $P$ on spanning forests and linking edges (defined previously in \eqref{eq:Pdef}), we use the Metropolis-Hastings algorithm with our Multi-Scale Merge-Split algorithm as the proposal method.
We will denote by $Q(S,S')$ the probability of starting from $S=(T,L)$, the multi-scale hierarchical forest $T$ and linking edge set $L$, and proposing the hierarchical forest $T'$ and edge set $L'$ with $S'=(T',L')$ using the Multi-Scale Merge-Split algorithm. In other words, if the current state of the chain is the hierarchical forest $T$ with linking edge set $L$, and $S=(T,L)$, then the measure $Q(S,\;\cdot\;)$ is the distribution of the next proposed move of the chain. Following the Metropolis-Hastings prescription, this move is accepted with a probability $A(S,S')$ defined by 
\begin{align}\label  {eq:A}
A(S, S') = \min\left(1, \frac{P(S')}{P(S)}\frac{Q(S', S)}{Q(S, S')}\right)=\min\left(1, e^{-\beta[ J(\xi')- J(\xi)]} \left[\frac{\tree_\mathcal{H}(\xi) \mathcal{L}(\xi)}{\tree_\mathcal{H}(\xi')\mathcal{L}(\xi')}\right]^\gamma  \frac{Q(S', S)}{Q(S, S')}\right),
\end{align}
and rejected with probability $1-A(S,S')$, where we have used the shorthand notation $\xi = \xi(T)$ and $\xi' = \xi(T')$. If the step is accepted, the next state is the proposed state; if the step is rejected, the next state does not change.
We conjecture that all states with non-zero probability are connected through a sequence of proposals; if this is true, this process with converge to sampling from the measure $P$ if run for sufficiently many steps.

\subsection{The Multi-Scale Merge-Split Algorithm}\label{sec:MS}
We now describe the Multi-Scale Merge-Split Markov Chain $Q$ introduced in previous section. As already mentioned, this Multi-Scale Merge-Split algorithm is specifically designed to have both forward and backward transition probabilities which can be efficiently computed.  From \eqref{eq:A}, we see that this is critical if it is to be used in the Metropolis-Hastings algorithm as a proposal.

\subsection*{The Multi-Scale Merge-Split proposal and probability \texorpdfstring{$Q$}{Q}} \label{sec:mergeSplitQ}
We first outline the Multi-Scale Merge-Split algorithm. We assume that the current state of the chain is the hierarchical forest $T$.  Our goal is to produce $T'$ which corresponds to merging two hierarchical trees in $T$ linked by an edge in $L$, then redivide the merged tree into two new hierarchical trees, along with a new linking edge, which satisfy constraints given by $J$.  We then calculate $Q(S,S')$ and $Q(S',S)$ in order to compute the acceptance ratio. Before diving into the details, we provide a high-level outline of this procedure.

Given spanning forest $T=(T_1,\cdots,T_n)$ and linking edge set $L$, we
\begin{enumerate}
    \item \textbf{Choose two trees to merge.} To do this choose a linking edge, $e_l\in L$.  Each vertex of the edge will correspond to a tree, $T_i$ and $T_j$ from $T$ which correspond to adjacent districts.
    \item \textbf{Draw a coarse spanning tree covering merged districts}. Draw a coarse-scale representation of a new hierarchical tree $\mathcal{Q}^\ell(T_{ij}')$ uniformly at random on coarsest level of the hierarchy induced on the merged districts, $\mathcal{Q}^\ell(\xi_{i,j})$;
    here, $\xi_{i,j}$ is the induced subgraph of the union of vertices in $T_i$ and $T_j$ on the finest scale. The induced graph $\xi_{ij}= (V_{ij}, E_{ij})$ is defined as $V_{ij}(\xi) = \{v \in V_0 \mid \xi(v) \in  \{i,j\} \}$ and $E_{ij}(\xi) = \{(v,u) \in E_0 \mid \xi(v), \xi(u) \in \{i,j\} \}$ (e.g. Figure~\ref{sfig:topLevelTree}).  Recall that $\mathcal{Q}^\ell(\xi_{i,j})$ maps the subgraph $\xi_{i,j}$ at the finest scale to the coarsest scale.
    \item \textbf{Determine removable edges and vertices to refine.} We will now split the tree into two trees, by either removing an edge or splitting a node. To this end, we identify edges of the coarse-scale representation of $T_{ij}'$ (denoted $\mathcal{Q}^\ell(T_{ij}')$), such that, once the edge is removed the two remaining trees each comply with some constraints, such as population balance.
    Similarly, we identify nodes such that the population of the node could be divided and joined with some edges of the tree to comply with the same constraints.
    Recall that since we are at the coarsest scale, it is more likely that we will find nodes to split than edges (e.g. see red nodes and edges in Figure~\ref{sfig:topLevelTree}). 
    \item \textbf{Refine specified vertices.} For the nodes that could be split, expand the nodes on the next finest level, $\ell-1$;  this is to say that we resolve the tree within the partitions one level down that corresponding to the cuttable nodes. For each node, draw a uniformly random spanning tree on the resulting inner-node multi-graph at the $\ell-1$ level (e.g. Figure~\ref{sfig:expandedTree}).
    \item \textbf{Continue to identify and refine until finest scale is reached.} Continue recursively by identifying edges and nodes on the newly expanded nodes. Continue until the base level is reached. At the base level we will only search for edges to remove, as nodes at the base level cannot be cut.
    Recall that each edge at the various levels of the hierarchy directly corresponds to an edge at the base level. At this point we have a collection of edges that may be removed and a partially specified hierarchical tree $T_{i,j}'$; it is partially specified at different scales because each of its coarsest nodes have only been resolved down to a certain scale (e.g. Figure~\ref{sfig:expandedTree}). 
    \item \textbf{Choose an edge to remove from those identified across all scales.} Removing any of the collected edges will result in two trees that comply with the specified constraints (such as population deviation).  Select one such edge, $e_l'$ and remove it from the new hierarchical tree $T_{ij}'$, leaving two new trees $T_i'$ and $T_j'$.  Place $e_l'$ in the linking edge set and remove $e_l$ from the linking edge set (e.g. Figure~\ref{sfig:newForest}).  Depending on the rule for drawing linking edges, re-sample linking edges between altered and non-altered districts.
    \item \textbf{Calculate forward and backward probabilities for acceptance ratio.} Calculate the probability of proposing $T_i'$, $T_j'$, and $e_l'$ starting from $T_i$, $T_j$, and $e_l$, $Q(S,S')$, and the reverse probability $Q(S',S)$.
\end{enumerate}
    
We now give more details about how each of these steps might be implemented.  The first step may be implemented in a variety of ways; for example, we may chose uniformly, or weight the choice by some property of the shared boundary between districts such as length or the values of the score function $J$. Depending on the constraints, we may only wish to choose from a subset of the linking edges.

The second step is achieved by recursively employing Wilson's algorithm from the coarsest hierarchical description to the finest, which employs loop-erased random walks.  For vertices that cannot be split, we do not need to specify which tree is drawn within the vertex as it suffices to know that \emph{some} tree was drawn. Thus we can store a coarser multi-scale description of the tree.  We prove that the above algorithm uniformly samples hierarchical tree space below in Theorem~\ref{thm:HTdraw}.

For the third and fourth steps, we focus on population constraints and coarse node constraints (e.g. county constraints) in the current work. These steps involve recursively employing a simple depth-first search along the current multi-scale tree representation with exit criteria based on the remaining population within a search branch; we look both for edges to split and vertices with population that could be divided to satisfy population constraints.  If splitting the vertex or cutting the edge would violate county split provisions we will omit it. Vertices that could be cut are specified with a recursive Wilson's algorithm which allows the depth-first search to continue at the finer level. In the redistricting problem we may also take advantage of the added geographic information embedded in the original map to quickly determine which nodes are possible to cut (see Section~\ref{ssec:expandNode}).

Choosing the specific edge to cut in the fifth step may be done uniformly or with a weighted distribution that might, for example, favor more equal populations, perhaps tempered according to $J$. 

In many ways, step six is the most involved. It also critically depends on the details of how the previous four steps were implemented. It is in this step that we see why the choice of expanding to the space of hierarchal forests, rather than partitions, is important, as well as how keeping linking edges further simplifies computation.

\subsection*{Calculating \texorpdfstring{$Q(S,S')$}{Q(S,S')} and \texorpdfstring{$Q(S',S)$}{Q(S',S)}}
Given the state $S=(T,L)$, let us denote by $p(e_l \mid S)$ the probability 
from step one of picking $e_l\in L$ and the associated pair of adjacent spanning trees $T_i$ and $T_j$ to merge. This probability is simple to calculate for most reasonable choices of how to perform step 1 and will be based on the linking edges $L$, and possibly the spanning forest $T$. We will let $T'$ (in $S'$) denote $T$ (in $S$) with the $T_i$ and $T_j$ replaced by $T_i'$ and $T_j'$ respectively. Then
\begin{align}\label{eq:MergeSplitQ}
Q(S, S') = p(\{i,j\} \mid S) q(\{T_i, T_j\, e_l\}, \{T_i', T_j', e_l'\}).
\end{align}
where $q(\{T_i, T_j, e_l\}, \{T_i', T_j', e_l'\})$ is the chance that the merging of $\{T_i, T_j, e_l\}$ and subsequent splitting produces the replacement spanning trees and edge $\{T_i', T_j', e_l'\}$.

Recall from above that the new linking edge was once part of the merged tree. Thus, to compute $q(\{T_i, T_j, e_l\}, \{T_i', T_j', e_l'\})$, we must examine (i) the chance of drawing the tree defined by edges $E(T_i')\cup E(T_j') \cup \{e_l'\}$ on $\xi_{ij}$, which we will denote as $T_{(T_i', T_j', e_l')}$; and (ii) the chance of choosing $e_l'$ as the new linking edge from the tree $T_{(T_i', T_j', e_l')}$.  We will show that the probability of drawing a single tree on $\xi_{ij}$ from $HT(\xi_{ij})$ is $\tau_{\mathcal{H}}(\xi_{ij})^{-1}$ below in Theorem~\ref{thm:HTdraw}. We will denote the second probability that we cut the tree $T_{(T_i', T_j', e_l')})$ at edge $e_l'$ be $P_{cut}(e_l' \mid T_{(T_i', T_j', e_l')})$. We will specify $P_{cut}$ below in Section~\ref{ssec:pcut}.

Putting this all together, we find that the probability of the proposing $T_i'$, $T_j'$ and $e_l'$ from $T_i$, $T_j$ and $e_l$ is
\begin{align}
q(\{T_i, T_j, e_l\}, \{T_i', T_j', e_l'\}) = \frac{1}{\tau_{\mathcal{H}}(\xi_{ij})} P_{cut}(e_l' \mid T_{(T_i', T_j', e_l')}).
\label{eqn:twoDistProposal}
\end{align}

\subsection*{Properties of the Merge-Split Proposal \texorpdfstring{$Q$}{Q}} 
First observe that when the Merge-Split proposal from \eqref{eq:MergeSplitQ} is inserted into the formula for the acceptance probability from \eqref{eq:A}, we obtain
 \begin{align}\label{AQ}
    A(S, S') = \min\left(1, e^{-\beta(J(\xi')-J(\xi))}\left[\frac{\tau_\mathcal{H}(\xi) \mathcal{L}(\xi)}{\tau_\mathcal{H}(\xi') \mathcal{L}(\xi')}\right]^\gamma\frac{p(e_l' \mid S')} {p(e_l \mid S)}\frac{q(\{T_i', T_j', e_l'\}, \{T_i, T_j, e_l\})}{q(\{T_i, T_j, e_l\}, \{T_i', T_j', e_l'\})}\right). 
 \end{align}
From this we see that if $\gamma=0$, one does not need to calculate the $\tau_\mathcal{H}(\xi)$ and $\tau_\mathcal{H}(\xi')$ factors, which reduces the computational costs. Furthermore, as mentioned before, the difference between $\tau_\mathcal{H}(\xi)$ and $\tau_\mathcal{H}(\xi')$ may be a principle reason for a lower acceptance rate when $\gamma$ is closer to 1.
 
Inserting \eqref{eqn:twoDistProposal} into \eqref{AQ},  we see that the ratios of the Merge-Split probabilities can be written as
 \begin{align}
   \label{eq:qq}
   \frac{q(\{T_i', T_j', e_l'\}, \{T_i, T_j, e_l\})}{q(\{T_i, T_j, e_l\}, \{T_i', T_j', e_l'\})}= \frac{P_{cut}(e_l \mid T_{(T_i, T_j, e_l)})}{P_{cut}(e_l' \mid T_{(T_i', T_j', e_l')})}\,.
 \end{align}
In particular, we see that the $\tau_\mathcal{H}(\xi_{ij})$ factors in each $q$ expression cancel and hence need not be computed. The acceptance ratio becomes 
\begin{align}\label{eq:Asimplified}
  A(S, S') = \min\left( 1, 
  e^{-\beta [ J(\xi') - J(\xi)] } \left[\frac{\tau_\mathcal{H}(\xi_i) \tau_\mathcal{H}(\xi_j) \mathcal{L}(\xi)}{\tau_\mathcal{H}(\xi_i') \tau_\mathcal{H}(\xi_j') \mathcal{L}(\xi')}\right]^\gamma \frac{p(e_l' \mid S')}{p(e_l\mid S)} \frac{P_{cut}(e_l \mid T_{(T_i, T_j, e_l)})}{P_{cut}(e_l' \mid T_{(T_i', T_j', e_l')})}
  \right)
\end{align}
where we have used the fact that the spanning forests $T$ and $T'$ only differ in the $i$th and $j$th trees so
\begin{align*}
  \frac{\tree_\mathcal{H}(\xi(T))}{\tree_\mathcal{H}(\xi(T'))}&= \frac{\tree_\mathcal{H}(\xi_i)  \tree_\mathcal{H}(\xi_j)}{\tree_\mathcal{H}(\xi_i')  \tree_\mathcal{H}(\xi_j')}.
\end{align*}

We may further simplify this equation by using \eqref{eqn:HTcount} and writing
\begin{align}
\label{eq:countRatioSimplified}
  \frac{\tree_\mathcal{H}(\xi(T))}{\tree_\mathcal{H}(\xi(T'))}
 = \frac{\tree(H_\ell(\xi_i)) \tree(H_\ell(\xi_j)) \prod_{n=1}^\ell \prod_{v\in V(H_n(\xi_i)) \cup V(H_n(\xi_j))} \tree(\mathcal{Q}^{-1}(v))} {\tree(H_\ell(\xi_i')) \tree(H_\ell(\xi_j')) \prod_{n=1}^\ell \prod_{v\in V(H_n(\xi_i')) \cup V(H_n(\xi_j'))} \tree(\mathcal{Q}^{-1}(v))}.
\end{align}
The majority of terms under the products will be shared across the numerator and denominator.  Indeed a Merge-Split operation will take one cascade of splits (e.g. split county and precinct) and transfer it to another cascade of splits (e.g. there will be one different in a split county, one different in a split precinct, etc...). The tree counts for the nodes that are not changed in the Merge-Split process will cancel. There will only be one node per level per district that will be altered in this procedure and for which we will need to recompute the tree count.  If we denote such nodes as $v_{i:ij}^n$ for the node at level $n$ in $\xi_i$ that is cut across $\xi_i$ and $\xi_j$, and $v_{ij}^n$ as the merged cut node in $H_n(\xi_{ij})$ (defined precisely as $\mathcal{Q}(\mathcal{Q}^{-1}(v_{i:ij}^n) \cup \mathcal{Q}^{-1}(v_{j:ij}^n))$), then \eqref{eq:countRatioSimplified} simplifies to 
\begin{align}
\label{eq:countRatioSimplifiedPair}
  \frac{\tree_\mathcal{H}(\xi(T))}{\tree_\mathcal{H}(\xi(T'))}
  = 
  \frac{
    \tree(H_\ell(\xi_i)) \tree(H_\ell(\xi_j))
    \prod_{n=1}^\ell \tree(\mathcal{Q}^{-1}(v_{i:ij}^n)) \tree(\mathcal{Q}^{-1}(v_{j:ij}^n)) \tree(\mathcal{Q}^{-1}({v_{ij}^n}'))
  } {
    \tree(H_\ell(\xi_i')) \tree(H_\ell(\xi_j'))
    \prod_{n=1}^\ell \tree(\mathcal{Q}^{-1}({v_{i:ij}^n}')) \tree(\mathcal{Q}^{-1}({v_{j:ij}^n}')) \tree(\mathcal{Q}^{-1}(v_{ij}^n))}.
\end{align}
We remark that ${v_{ij}^n}' \in V(H_n(\xi_i)) \cup V(H_n(\xi_j))$ when ${v_{ij}^n}'$ is not split in $T$ (and similarly for $v_{ij}^n$);
if the node is split in both $T$ and $T'$, then ${v_{ij}^n}' = {v_{ij}^n}$ implying $\tree(\mathcal{Q}^{-1}({v_{ij}^n}')) = \tree(\mathcal{Q}^{-1}({v_{ij}^n}))$ and the terms cancel in \eqref{eq:countRatioSimplifiedPair}.  These two facts demonstrate why there is equality between \eqref{eq:countRatioSimplified} and \eqref{eq:countRatioSimplifiedPair}. 

If there is no split node at a certain level, say $k$, there will be no split nodes between the districts at any level finer (i.e. less) than $k$. In the simplification of \eqref{eq:countRatioSimplifiedPair}, this means that $v_{i:ij}^n$, $v_{j:ij}^n$, and $v_{ij}^n$, would not be well defined when $n\leq k$. Informally, we would omit these terms from the equation. Formally, we set the tree counts to $\tree(\mathcal{Q}^{-1}(v_{ij}^k)) = \tree(\mathcal{Q}^{-1}(v_{i:ij}^k)) = \tree(\mathcal{Q}^{-1}(v_{j:ij}^k)) = 1$ in \eqref{eq:countRatioSimplifiedPair}, for $k\leq n$ or, equivalently, when the split node is undefined.

As mentioned previously, the ratio between spanning tree products, $\tree_\mathcal{H}$, may be large between districting plans. This disparity is eliminated when $\gamma = 0$, but setting $\gamma = 0$ favors sampling partitions with higher values of $\tree_\mathcal{H}$. The parameter $\gamma$ is presented as a smoothly varying parameter because it demonstrates how one may use a tempering (e.g. simulated or parallel tempering) scheme to vary $\gamma$ from 0 to 1 across multiple chains in an extended product measure. 
We have chosen a form of the measure which essentially depends only on the partition $\xi(T)$ induced by the spanning forest $T$ (noting that the number of linking edge sets is also determined by $\xi(T)$). 
Additionally, when $\gamma = 0$ there is no need to compute the number of trees on the new districts, as the products $\tree_\mathcal{H}$ do not explicitly appear in the measure nor the proposal ratio.  

\subsection*{Lifting from partitions to tree partitions with linking edges eases computational complexity} 
Note that \eqref{eqn:twoDistProposal} reduces simply to finding the number of hierarchical trees along with the probability of choosing a given cut.  As we will specify below with implementation details (see Section~\ref{sec:implement}), this is a fairly simple calculation.  We pause to compare this calculation with Recom \cite{deford2019recombination} and our previous Merge-Split algorithm \cite{carter2019merge} in order to demonstrate the computational advantage of including a linking edge set.

In Recom, proposals are made across partitions rather than across trees.  To reach a particular partition would require summing over all trees that could have lead to the same partition; in each element of the sum, the probability of choosing an edge could change, meaning that one would have to compute
\begin{align}
q(\{\xi_i, \xi_j\}, \{\xi_i', \xi_j'\}) = \sum_{T_i' \in ST(\xi_i')} \sum_{T_j' \in ST(\xi_j')} \sum_{e\in E(\xi_i', \xi_j')} \frac{1}{\tau_\mathcal{H}(\xi_{ij})} P_{cut}(e \mid T_{(T_i', T_j', e)}),
\end{align} 
in the hierarchical context,
where $E(\xi_i', \xi_j')$ is the set of edges in $E_0$ connecting $\xi_i'$ to $\xi_j'$.
This sum is computationally infeasible.  

The key insight of the Merge-Split was to expand the state space to the spanning forest which simplifies the above summation to
\begin{align}
\label{eqn:ogMSFramework}
q(\{T_i, T_j\}, \{T_i', T_j'\}) = \sum_{e\in E(\xi_i, \xi_j)} \frac{1}{\tau_\mathcal{H}(\xi_{ij})} P_{cut}(e \mid T_{(T_i', T_j', e)}),
\end{align}
which turns a summation with a number of terms into a sum along the boundary between partitions.

In the current work, we have expanded the state to further simplify the sum.  By keeping track of which edge was cut to split the trees, the proposal probability becomes the single term in \eqref{eqn:twoDistProposal}, 
\begin{align*}
q(\{T_i, T_j, e_l\}, \{T_i', T_j', e_l'\}) = \frac{1}{\tau_\mathcal{H}(\xi_{ij})} P_{cut}(e_l' \mid T_{(T_i', T_j', e_l')}),
\end{align*}
which further simplifies the computation at the expense of further expanding the state space.

\begin{remark}
The original Merge-Split framework shown in \eqref{eqn:ogMSFramework} without tracking a linking edge set is entirely compatible with the current multi-scale framework.  There is, however, the added burden of needing to expand more nodes as each of the different merged-tree might specify different nodes to expand.
\end{remark}

\section{Implementation details of Multi-Scale Merge-Split} 
\label{sec:implement}

Many of the above steps may be achieved with a variety of choices.  In this section, we specify the choice or choices we explore in the current work, as well as discuss some other possible methodologies. For example, there are many choices when picking which adjacent districts to merge (denoted $P(\{i,j\}|S)$ above) -- one may uniformly choose amongst adjacent (linked) districts or weight the choice by the shared border length, the shared number of conflicted edges, or some heuristic of the acceptance probability.  In the current work we make this choice by uniformly selecting one of the linking edges.

\subsection{Determining the ratio of tree counts in a partition}
If $\gamma \neq 0$ we must compute the ratio of the number of hierarchical trees on each subgraph induced by $\xi_i$. This is accomplished via Kirchoff's theorem and is, algorithmically, the slowest step of the method, however (i) we achieve the hierarchical speed-up mentioned in Section~\ref{sssec:complexHT} and (ii) we pre-compute the tree count on any preserved element of the hierarchy (such as number of precinct trees with in a county) and track the tree count on any split element of the hierarchy.  In terms of complexity, we will only have to recompute tree counts on the new split elements, of which there will only be one at each level (e.g. one new split county, one new split precinct, etc.), as well as counting the number of trees on the new coarse level structure. In general, this means that we need only compute one tree count per level per district (one for each district intersected with the split node at each level), and one tree counts per district on the coarsest scale of each graph. This still leads to an aggregate computational complexity of $O(\ell)\sim O(\log(N))$ (the same as computing the total number of hierarchical trees), however, the leading order constant will be significantly reduced due to the pre- and tracked computations on the remaining nodes.

To illustrate an example of the above complexity, we revisit Figure~\ref{sfig:newForest}.  In counting the number hierarchical trees associated with the new gray and green partitions, we will need to compute the number of ways the coarse trees could have been drawn (accounting for the split county nodes at the coarse level), as well as how many precinct trees could be drawn in each color of the split gray/green county (a total of four applications of Kirchoff's algorithm).  The remaining hierarchical tree counts, such as the number of precinct trees within each green county or the number of trees in the green district of the blue/green county, will have already been computed.  When computing the acceptance ratio \eqref{eq:Asimplified}, many of these computations will not even need to be examined as demonstrated in \eqref{eq:countRatioSimplifiedPair}. We further remark that the last terms in the products will either cancel or have already been stored in memory, helping to illustrate the above claim that we must compute (i) the new coarse level tree counts per district (the two denominator tree counts outside the product) and (ii) one tree count per district (the first two terms within the product in the denominator).

\subsection{Determining the probability of cutting an edge}
\label{ssec:pcut}
When choosing what edge to cut, we must specify the probability of cutting edge $e$ given merged tree $T_{ij}$, which we denote as $P_{cut}(e | T_{ij})$.  Perhaps the simplest implementation is to uniformly choose an edge from the set of edges such that the the cut leads to two trees with populations within the constraints set out by $J$, along with any other constraints we might impose. Let $E_c(T)$ denote the edges, such that if a single edge was removed from $T$, the remaining two trees would satisfy a set of constraints. Then \begin{align}
  P_{cut}(e | T) = 
  \begin{cases} 
    |E_c(T)|^{-1} & e \in E_c(T),\\
    0 & \text{ otherwise.}
  \end{cases}
\end{align}
We adopt this approach in the present work.  To determine $E_c(T)$, we begin by using the algorithm from \cite{carter2019merge} in which we randomly root the merged tree and then traverse from the farthest leaves in order to compute the population (and any other relevant conditions) when removing the edge.  In addition to searching for edges to cut, we also determine whether a node could be cut (see below in Section~\ref{ssec:expandNode}).  For each node that might be cut, we expand the node at the next level of the hierarchy.  The edges on the multi-graph connecting the coarse nodes correspond to edge connections between the finer scale nodes (since they are mapped to fine scale edges spanning partitions) and these specified edges are used to connect nodes or partitions representing the coarse graph to the finer subgraph on a partition.  At each finer subgraph, we draw a new tree (see Figure~\ref{sfig:expandedTree}); together with the edges across coarse nodes/partitions we arrive at a multi-level graph and repeat the above algorithm, searching for nodes and edges that can be cut.
We recursively expand nodes until we arrive at the base level and can only add edges.  The set of edges collected across all steps forms $E_c(T)$.

\subsection{Uniformly sampling/specifying a hierarchical tree}
Before illustrating how to check if a node should be expanded, we first turn to sampling the space of hierarchical trees.
As already mentioned, uniform hierarchical trees will be drawn using Wilson's on disjoint hierarchical subgraphs. There are several implementations of Wilson's algorithm and we use the same algorithm as in \cite{carter2019merge}, with the exception that we will employ the algorithm for multi-graphs.  

In steps three and four of the algorithm (see Section~\ref{sec:MS}) we claim that recursively employing Wilson's algorithm on successively finer features will sample from the space of hierarchical trees, however we have yet to specify what distribution this process will sample from.  We state and prove the following

\begin{theorem}
\label{thm:HTdraw}
Given a graph $G$ with hierarchy $\mathcal{H}(G)$, sampling independent spanning trees with Wilson's algorithm on $H_{\ell}$, and on $\mathcal{Q}^{-1}(v)$ for every $v\in V(H_n(G))$ over all $n \in \{1, ..., \ell\}$, will uniformly sample a tree from $HT(G)$.
\proof{
Consider any $t \in HT(G)$.  Consider $\mathcal{Q}^\ell(t)$ which is a spanning tree on the multi-graph $H_\ell(G)$ by construction.  
Since Wilson's algorithm uniformly samples spanning trees on multi-graphs, the probability of drawing $\mathcal{Q}^\ell(t)$ is $\tau(H_\ell(G))^{-1}$.  Similarly, consider any $n \in \{1, ..., \ell\}$, $v\in V(H_n(G))$, and the sub-multi-graph given by $\mathcal{Q}^{-1}(v)$.  Consider also the subgraph of $\mathcal{Q}^{n-1}(t)$ induced by $V(\mathcal{Q}^{-1}(v))$.  The induced subgraph is a tree, since it is part of the large tree $\mathcal{Q}^{n-1}(t)$, and employing Wilson's algorithm on the subgraph $\mathcal{Q}^{-1}(v)$ will sample the induced tree with a probability of $\tau(\mathcal{Q}^{-1}(v))^{-1}$.  

Each of the induced subgraphs are independent of one another and of $\mathcal{Q}^\ell(t)$, which is to say that the edges in one graph do not correspond to the edges of another when mapped back to $t$ through the multi-graph edge correspondences with the base graph.  Therefore, the probability of drawing $t$ under this procedure is 
\begin{align}
\left(\tau(H_\ell(G)) \prod_{n = 1}^\ell \Bigg[\prod_{v \in V(H_n(G))} \tau(\mathcal{Q}^{-1}(v))\Bigg]\right)^{-1} = \left(\tau_{\mathcal{H}}(G)\right)^{-1},
\end{align}
by equation \eqref{eqn:HTcount}.  In short, the tree will have been uniformly sampled from the space of hierarchical trees by applying Wilson's algorithm to the coarsest quotient graph and all sub-multi-graphs induced by partitions across each level of the hierarchy.
\qed}
\end{theorem}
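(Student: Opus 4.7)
The plan is to show that the recursive Wilson-sampling procedure lands on any fixed hierarchical tree $t \in HT(G)$ with probability exactly $\tau_{\mathcal{H}}(G)^{-1}$, and then invoke Theorem \ref{thrm:HTcount} to identify this with the uniform distribution on $HT(G)$. The strategy rests on a decomposition-and-independence argument: every hierarchical tree is uniquely determined by a top-level spanning tree together with a family of ``interior'' spanning trees, one per expandable vertex at each level, and Wilson's algorithm samples each of these components uniformly and independently.

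First I would fix $t \in HT(G)$ and explicitly produce the decomposition. By the definition of $HT(G)$, the projection $\mathcal{Q}^\ell(t)$ is a spanning tree on $H_\ell(G)$ with the single-edge-between-nodes property; this is the ``top-level'' component. Next, for each $n \in \{1, \ldots, \ell\}$ and each $v \in V(H_n(G))$, consider the subgraph of $\mathcal{Q}^{n-1}(t)$ induced by $V(\mathcal{Q}^{-1}(v))$. I would argue that this induced subgraph is itself a spanning tree on $\mathcal{Q}^{-1}(v)$: it is connected because $\mathcal{Q}^{n-1}(t)$ must carry any two vertices inside $v$ to the same coarse vertex at level $n$ (so an interior path is forced by the hierarchical tree property), and it is acyclic as a subgraph of a tree. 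This gives the sought decomposition.

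Next I would record that, thanks to the edge-correspondence between the multi-graphs and the base graph mentioned in the excerpt, the edge sets contributed by the top-level tree and by each interior induced tree are pairwise disjoint when pulled back to $G$, so specifying each component independently reconstructs $t$ uniquely. Wilson's algorithm (which I take as a black box that samples spanning trees of a multi-graph uniformly) therefore produces the top-level component with probability $\tau(H_\ell(G))^{-1}$ and each interior component with probability $\tau(\mathcal{Q}^{-1}(v))^{-1}$, and because the $1+\sum_{n=1}^\ell |V(H_n(G))|$ invocations of Wilson's algorithm are run on disjoint multi-graphs they are independent, so the joint probability factors as the product.

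Finally I would multiply these probabilities and cite Theorem \ref{thrm:HTcount}:
\begin{align*}
\Pr[\text{procedure outputs } t] = \tau(H_\ell(G))^{-1} \prod_{n=1}^\ell \prod_{v \in V(H_n(G))} \tau(\mathcal{Q}^{-1}(v))^{-1} = \tau_{\mathcal{H}}(G)^{-1},
\end{align*}
which is independent of $t$ and hence equals the uniform distribution on $HT(G)$. The main obstacle is the decomposition step: one must verify carefully that the induced subgraphs are actually spanning trees of the correct sub-multi-graphs, using the defining condition that $\mathcal{Q}^n(t) \in ST(H_n(\xi_i))$ at every level, and that the edge contributions from different levels do not ``double-count'' any base-graph edge. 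Once this bijection between $HT(G)$ and the product of spanning-tree spaces is in hand, the probability computation is immediate.
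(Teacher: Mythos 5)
Your proposal is correct and follows essentially the same route as the paper's proof: decompose $t$ into the top-level tree $\mathcal{Q}^\ell(t)$ and the induced subtrees on each $\mathcal{Q}^{-1}(v)$, use uniformity and independence of the Wilson draws on disjoint sub-multi-graphs, and invoke Theorem~\ref{thrm:HTcount} to identify the product of reciprocals with $\tau_{\mathcal{H}}(G)^{-1}$. If anything, you are slightly more careful than the paper on the one subtle point — that the induced subgraph on $V(\mathcal{Q}^{-1}(v))$ is not merely acyclic but also connected, which follows from the hierarchical-tree condition rather than just from being a subgraph of a tree.
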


\subsection{Determining which nodes to expand}
\label{ssec:expandNode}
Upon drawing a tree at a given level, we must determine which nodes to expand at the finer scale. The minimal considerations on whether or not we should expand a node is to check whether it is possible that (i) a subset of the tree connections to the node, along with some fraction of the node's population, would be within the acceptable population limits; and (ii) the complementary population satisfies the same limit.  This is to say that given some multi-level tree, $t_m$, and node, $v\in V(t_m)$, we can consider the remaining connected components of $t_m$ upon removing $v$. We denote the set of these components as $\mathcal{C}$ and note that for every $C\in\mathcal{C}$ there exists a unique neighbor of $v$, $v_n \in C$, i.e. $(v_n, v)\in E(t_m)$. A necessary condition for the node to be expanded is that there is some subset $\mathcal{C}_i \subset \mathcal{C}$ such that these components together with some fraction of the node will be within the population constraints, as will the complementary components.  Letting $\pop_I-\epsilon$ be the smallest allowable population and $\pop_I + \epsilon$ be the largest allowable population, we obtain the condition that there must exist some $\mathcal{C}_i$ such that
\begin{subequations}
\label{eq:expandNecCond}
\begin{align}
\pop_I - \epsilon -  \pop(v) \leq& \sum_{u \in \mathcal{C}_i} \pop(u) < \pop_I + \epsilon, \text{ and }\\
\pop_I - \epsilon -  \pop(v) \leq& \sum_{u \in t_m\backslash (\mathcal{C}_i \cup \{v\})} \pop(u) < \pop_I + \epsilon.
\end{align}
\end{subequations}

One possible solution to determining which nodes to expand would simply be to iterate through all possible combinations of $\mathcal{C}$ and expand the node if we find any possible subset that potentially achieves proper population balance.  We can, however, take advantage of two observations in order to (i) only check certain nodes, and (ii) when checking a node, exam only a subset of $\mathcal{C}$ by taking advantage of the geographic structure that is used to define the graph hierarchy.

To the first point, we check if any one of the components $C\in\mathcal{C}$ has $\pop(C) > \pop_I + \epsilon$; if it does, conditions \eqref{eq:expandNecCond} cannot be satisfied.  To the second point, we store the clockwise neighbors associated with the adjacency criteria and the original districting graph at each level of the heirarchy (see Figure~\ref{sfig:mockPrecinctGeoGraph}).  We take this list and omit any edge not found in the multi-level tree $t_m$; we remark that for nodes that appears multiple times, the edge at the finest scale will be specified in the multi-graph, and we will therefore can determine which of the multiple segments we should keep.  For example, in Figure~\ref{sfig:mockPrecinctGeoTree}, we would obtain the oriented subset of neighbors to be $\{3,1,5,6,7\}$.

We then set binary lists to corresponds district assignments of each neighbor connected through $t_m$: For example, `ABBBB' corresponds to $\mathcal{C}_i = \{3\}$ in Figure~\ref{sfig:mockPrecinctGeoGraph}, and `BAABB' corresponds to $\mathcal{C}_i = \{\{1,2\}, \{4,5\}\}$.  
We remark that if a district is to be connected, we cannot allow for overlapping paths that would break a connection.  For example, in Figure~\ref{sfig:mockPrecinctGeoTree}, the assignment `ABABB' would force us to find a connected path between regions 1 and 6, and 5 and 7, which would be impossible.

Therefore, we search the possible choices of $\mathcal{C}_i$ by first checking if $\mathcal{C}_i = \emptyset$ meets criteria \eqref{eq:expandNecCond}, next we begin to iterate through a queue in a breadth first search with all single `A' assignments (from the example `ABBBB', `BABBB', `BBABB', `BBBAB' and `BBBBA').
For each element in the list, we check criteria \eqref{eq:expandNecCond}.  If the criteria is met we must expand the node; if $\pop(\mathcal{C}_i) < \pop_I + \epsilon$, we expand the `A' assignment region to the right (on the torus) and add it to the queue e.g. `ABBBB'$\rightarrow$`AABBB' and `BBBBA' $\rightarrow$ `ABBBA'.  

We do not add anything to the queue if the number of A's exceeds half the length of the nodes since, by symmetry, such an assignment will have already been explored. To see this through an example, note that `AAABB' is equivalent to `BBBAA' which will be added to the queue from `BBBAB.'

\begin{figure}
\centering
\subcaptionbox{Neighboring regions\label{sfig:mockPrecinctGeoGraph}}{\includegraphics[width=0.3\linewidth, clip = true, trim = {0cm 0cm 0cm 0cm}]{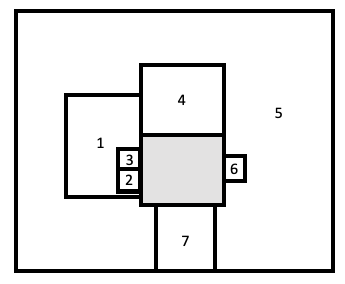}}\qquad
\subcaptionbox{With multi-level tree\label{sfig:mockPrecinctGeoTree}}{\includegraphics[width=0.3\linewidth, clip = true, trim = {0cm 0cm 0cm 0cm}]{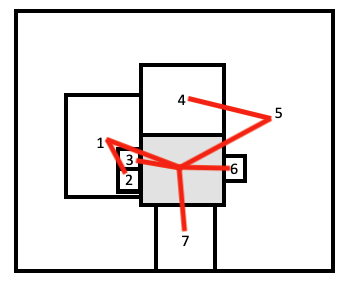}}\qquad
\caption{We display an example of mock neighboring geographical regions at a given level (A). The central grey district stores its neighbors in a clockwise ordered list such as \{1,2,3,1,4,5,6,5,7,5\}.  For a given tree at this level (B), we can obtain an oriented neighbor list such as \{3,1,5,6,7\}.}
\end{figure} 

\subsection{Defining linking edge sets (in concert with plan constraints)}
\label{ssec:linkingEdgeSets}
To close this section, we discuss several methods of defining and counting linking edge sets. In general, there are many possible choices in sampling and constraining linking edges. We do, however, require several properties for all linking edge sets, which are

\begin{itemize}
  \item Across any two districts, there can be, at most, one linking edge.  
  \item A linking edge must connect two districts across a hierarchical node that shares the districts.
\end{itemize}

The first condition ensures that two districts together with all linking edges form a hierarchical tree. This ensures the merged hierarchical tree has a non-zero probability of being proposed in the reverse proposal probability of the Metropolis-Hastings algorithm. The second condition ensures that districts that share a node within the hierarchy may be merged so that the split hierarchy may become whole within a proposal.

Perhaps the simplest method for drawing a linking edge set is to place a single linking edge across all adjacent districts while conforming to the requirements listed above.  Such a choice means that two adjacent districts \emph{must} only share one node at each level in the hierarchy, which is something that is seen in practice, but is not a necessary condition. We describe how one can remain in the framework by introducing a dynamic hierarchy in the Appedix~\ref{apdx:DynamicHierarchies}, but do not computationally explore this in the current work.

In this set up, and under the Multi-Scale Merge-Split algorighm, we erase all linking edges that link altered districts to non-altered districts, and then uniformly resample new linking edges to the newly formed districts (see Figure~\ref{fig:newLinkingEdgeSet}).

\begin{figure}
\centering
\includegraphics[width=0.85\linewidth, clip = true, trim = {0cm 0cm 0cm 0cm}]{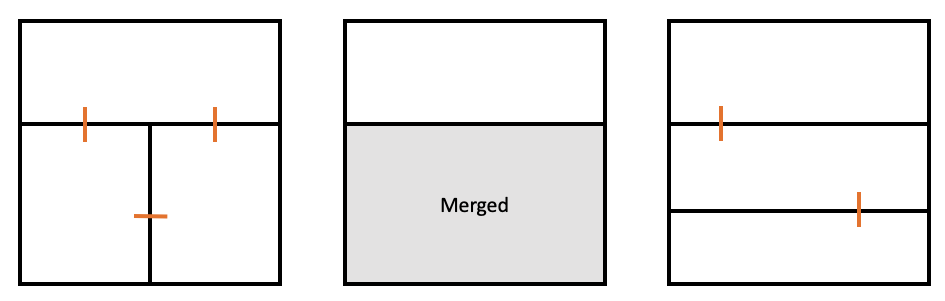}
\caption{We display an example of three districts with linking edges across all adjacent districts (left). The two bottom districts are merged and both the merged linking edges and the connecting linking edges are erased (middle).  Upon drawing a new district a new linking edge is chosen uniformly across the new top two districts (right).}
\label{fig:newLinkingEdgeSet}
\end{figure}

When $\gamma > 0$ in the measure, we must ensure that it is possible to compute the number of possible linking edge sets that correspond to the partition. In this algorithm, we simply take the product of the number of linking edges that could be drawn across adjacent districts.  This number will depend on whether or not there is a node in the graph hierarchy that is shared across the districts, and, if there is, the minimum level in the hierarchy that contains a split node.  Formally, we define
\begin{align}
\underline{n}_{ij} = \min\big(\{n+1 | \exists (u_i,u_j)\in E(H_n) \: s.t. \: &\mathcal{Q}(u_i) = \mathcal{Q}(u_i),\\
&\forall v\in V(\mathcal{Q}^{-n}(u_i)), v\in \xi_i, \text{ and } \\
&\forall v\in V(\mathcal{Q}^{-n}(u_j)), v\in \xi_j \} \cup 
\{\ell+1\}\big), \nonumber
\end{align}
as the finest level of the hierarchy that is split across districts $i$ and $j$.  The reason for taking the union with the single element set $\{\ell+1\}$ is because when there is no split element of any level of the hierarchy across the adjacent districts, the first set will be empty; furthermore, the first set of the right hand side has elements that are bounded above by $\ell$.  Next, if $\underline{n}_{ij}<\ell+1$, let $\underline{v}_{ij} \in H_{\underline{n}_{ij}}$ be defined as the finest node that is split between districts $i$ and $j$.
The number of possible linking edges between the districts $i$ and $j$ may then be defined as
\begin{align}
\mathcal{L}_{ij} = \begin{cases}
|\{(u_i,u_j)\in E_0| u_i\in\xi_i, u_j \in \xi_j\}| \text{ if } \underline{n}_{ij} = \ell + 1\\
|\{(u_i,u_j)\in E_0| u_i\in\xi_i, u_j \in \xi_j, \text{ and } u_i,u_j\in\mathcal{Q}^{-\underline{n}_{ij}}(\underline{v}_{ij})\}| \text{ otherwise}
\end{cases}.
\end{align}
We remark that if two districts are not adjacent, then the number of linking edges is zero by this definition.
The total number of linking edge sets for a given parition $\xi$ may then be defined as
\begin{align}
\mathcal{L}(\xi) = \sum_{i,j} \max(1, \mathcal{L}_{ij}) = \sum_{\{i,j|\xi_i, \xi_j \text{ adjacent}\}} \mathcal{L}_{ij}.
\label{eq:linkededgecount}
\end{align}

Thus, this method of defining the linking edge sets is both robust and gives rise to a fairly simple computation for $\mathcal{L}(\xi)$ which is a product between border lengths of districts at (i) the finest split level (if the districts share a node in the graph hierarchy) or (ii) the entire border (otherwise).

\subsubsection{Strictly constrained splits}
\label{sssec:strictLinkedEdges}
There is one additional simplification we wish to make which naturally arises when strictly prescribing the number of split nodes at the highest level of the graph hierarchy.  Furthermore, although it is possible to use this additional simplification in a more general context, there may be complex combinatoric problems that render it either less efficient or infeasible (see Appendix~\ref{apdx:complxLinkEdges} for further discussion of the potential issues).

The main idea is that it may be desirable to sample the space of redistricting plans given an optimal set of coarse level node splits or a prescribed number coarse level node splits.  In this case, we may only choose to merge and split on a district that already splits a node at the highest level and must ensure that the resulting district pair also splits a node at the highest level.  

Because of this restriction, it is unnecessary to consider linking edges between districts that do not share some node of the graph hierarchy as we will never consider them.  This simplifies the computation of $\mathcal{L}(\xi)$ as we will no longer need to compute the border length between a wide number of districts.  Formally, when determining $\mathcal{L}(\xi)$, we can simply redefine 
\begin{align}
\mathcal{L}_{ij} = \begin{cases}
1 \text{ if } \underline{n}_{ij} = \ell + 1\\
|\{(u_i,u_j)\in E_0| u_i\in\xi_i, u_j \in \xi_j, \text{ and } u_i,u_j\in\mathcal{Q}^{-\underline{n}_{ij}}(\underline{v}_{ij})\}| \text{ otherwise}
\end{cases}.
\label{eq:linkededgecountFix}
\end{align}
and then again employ \eqref{eq:linkededgecount}.

\begin{remark}[Dynamic hierarchies]
We can still use these methods to sample without regard for fixed hierarchical elements by starting with an arbitrary hierarchy and then dynamically evolving the hierarchy in the Metropolis-Hastings context. Although we do not study this in the current work, we describe the method in further detail in Appendix~\ref{apdx:DynamicHierarchies}.
\end{remark}

\section{Numerical Results}
\label{sec:NumericalResults}

We implement our algorithm on a two level hierarchy of precincts and counties on the North Carolina congressional districts. The plan to be used in the 2020 North Carolina congressional election splits 12 counties, and does not split any county between more than two districts.  This is consistent with the 2016 redistricting criteria which required that no county be split into more than two districts \cite{congressionalCriteria2016}.  We adopt these constraints and (i) do not allow for any county to be split into more than two districts, and (ii) require there to be 13 or fewer split counties.

The one-person one-vote mandate requires that districts be made up of an equal number of people. We sample the districting space to examine plans that deviate up to 2\% away from the strictest interpretation of the mandate. We use the the 2010 census data for population data. Although a 2\% deviation is not strictly legally compliant, we have previously observed that such deviations do not effect observables of interest \cite{Herschlag20} and remark that the last several redistricting cycles split precincts to the census block level to achieve population parity. Reducing population deviations across districts may be done either by sampling from a three or more level hierarchy (e.g. county, precinct, census block), or by making small modifications to the sampled district plans by exchanging population along the boundaries.  In either case, we omit such explorations in the current work.

We restrict our analysis to $\gamma = 0$.  This restriction favors plans with a larger product of hierarchical trees. We remark that certain implementations of ReCom also focus on districts with larger numbers of spanning tree counts as moves are accepted so long as they are below a given compactness threshold \cite{deford2019recombination}.

In choosing a method of determining the linked edge set, we choose to only track 13 linked edges, only change an edge between two changed districts, and allow the edge to span any level of the hierarchy.
We do this because setting $\gamma = 0$ allows us to forgo counting the number of linking edges associated with a partition.  
The reason for fixing the number of linked edges is that we wish to gain insight into the mixing properties when we transition to using census blocks at the base level with stricter population constraints.  Under finer graphs with tighter constraints, we expect that merging and splitting two districts will rarely lead to counties (or coarsest nodes) that are kept intact. This means that the limitations on which linked edges to choose may become identical in both linking edge methods.  Under the current sampling process, which allows 2\% population deviations, it is possible that we will split fewer than 13 counties. If we employed linked edges over the entire space we would then be able to merge and split on any adjacent districts which should accelerate mixing.  Therefore our choice of linked edge method is \emph{more} restrictive, and will potentially slow mixing, but may be better reflective of future implementations of our algorithm.

We implement the multi-scale merge split algorithm on 10 independent chains.  Two of the chains are initialized with congressional districts used in 2016 and 2020.  On the 2020 map we introduce one more random linked edge between two adjacent districts that do not split a county.  The remaining eight chains are initialized on randomly seeded plans.  We construct these seed plans by drawing a tree on the county level, and search for nodes to cut so that one of the resulting cuts is within acceptable population bounds.  We draw a tree on a random cuttable node and then look for edges to remove that would lead to one of the cuts being within the population constraint.  We then repeat the above process on the remaining unassigned nodes until we obtain 13 districts within the population constraints. We illustrate the evolution of the chain starting at the 2016 congressional map after two and four acceptances and after 10,000 proposals in Figure~\ref{fig:chain}.

\begin{figure}
\centering
\subcaptionbox{Initial plan; the 2016 plan \label{sfig:2016initchain}}{\includegraphics[width=0.45\linewidth]{./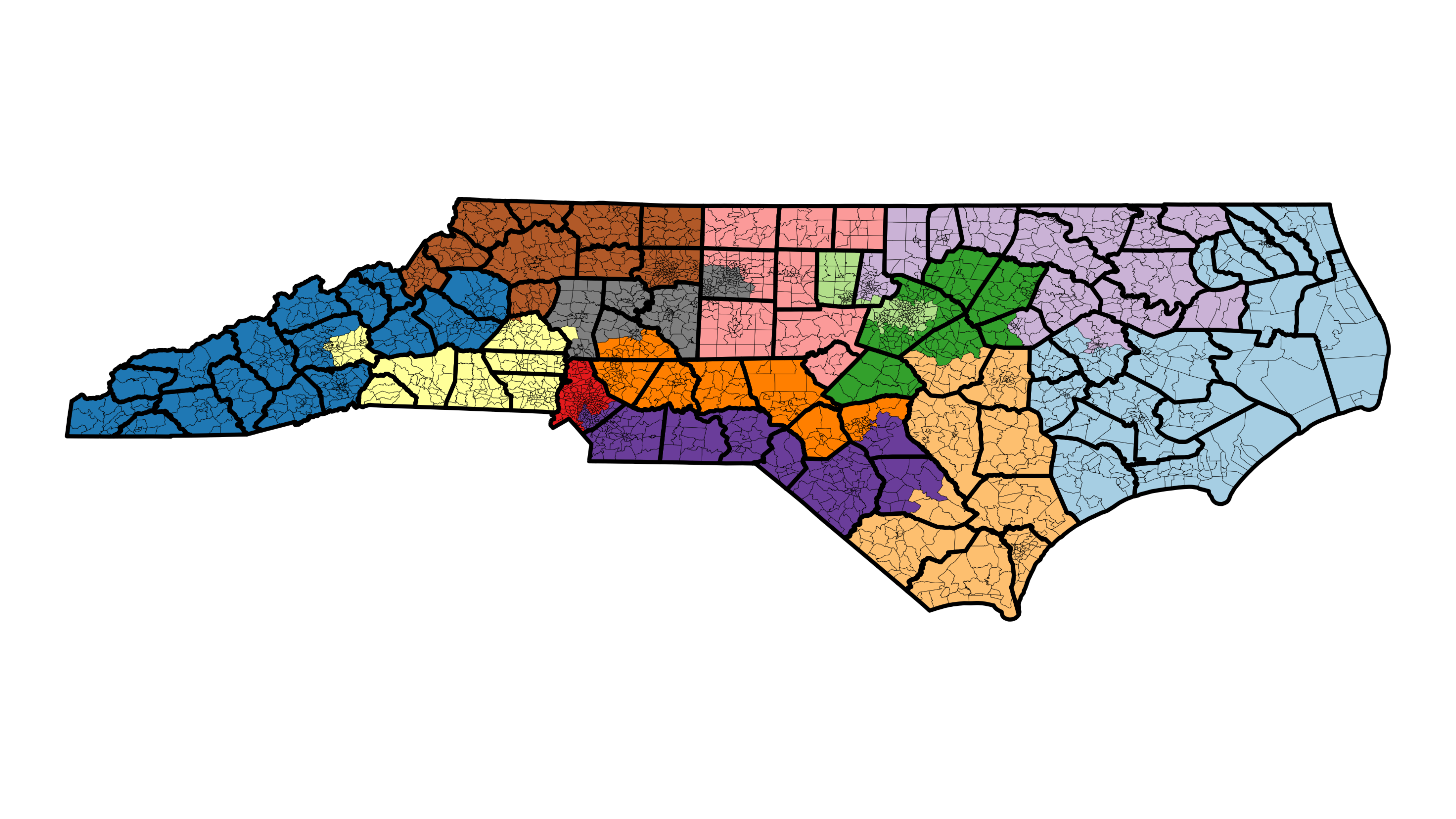}}\qquad
\subcaptionbox{State after two acceptances \label{sfig:2accept}}{\includegraphics[width=0.45\linewidth]{./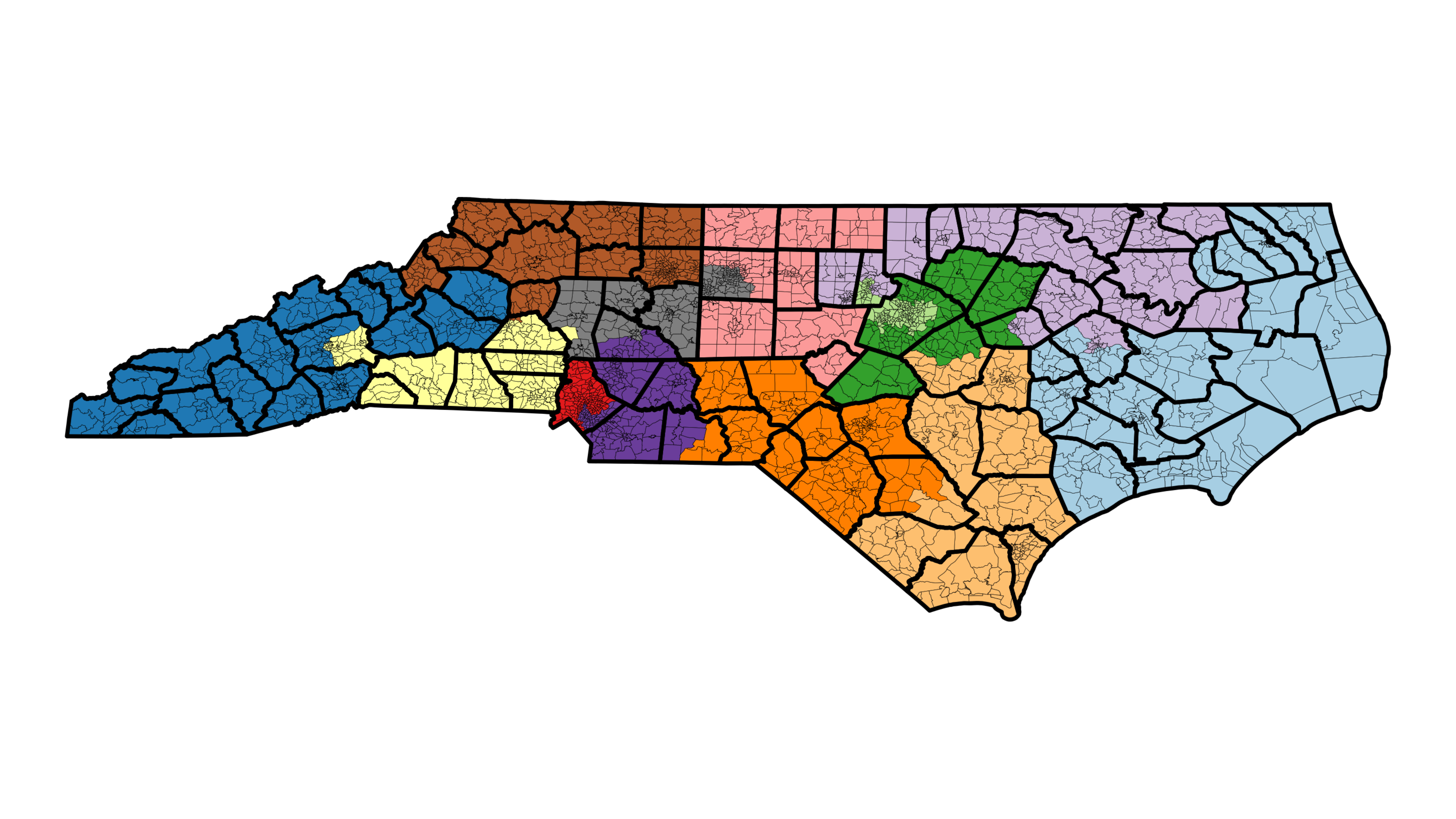}}\qquad
\subcaptionbox{State after four acceptances \label{sfig:4accept}}{\includegraphics[width=0.45\linewidth]{./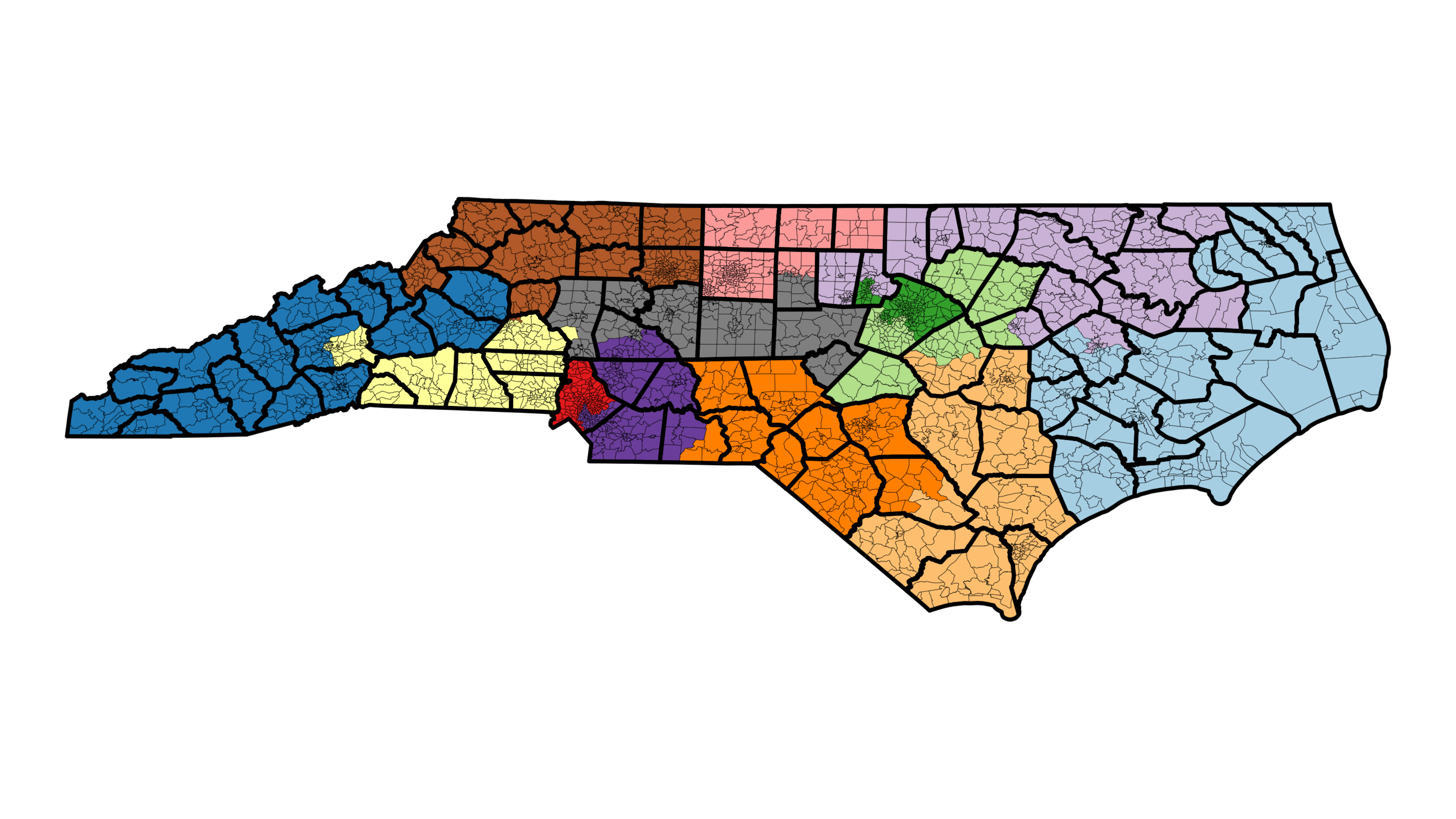}}\qquad
\subcaptionbox{State after roughly 2,500 acceptances \label{sfig:10Kprop}}{\includegraphics[width=0.45\linewidth]{./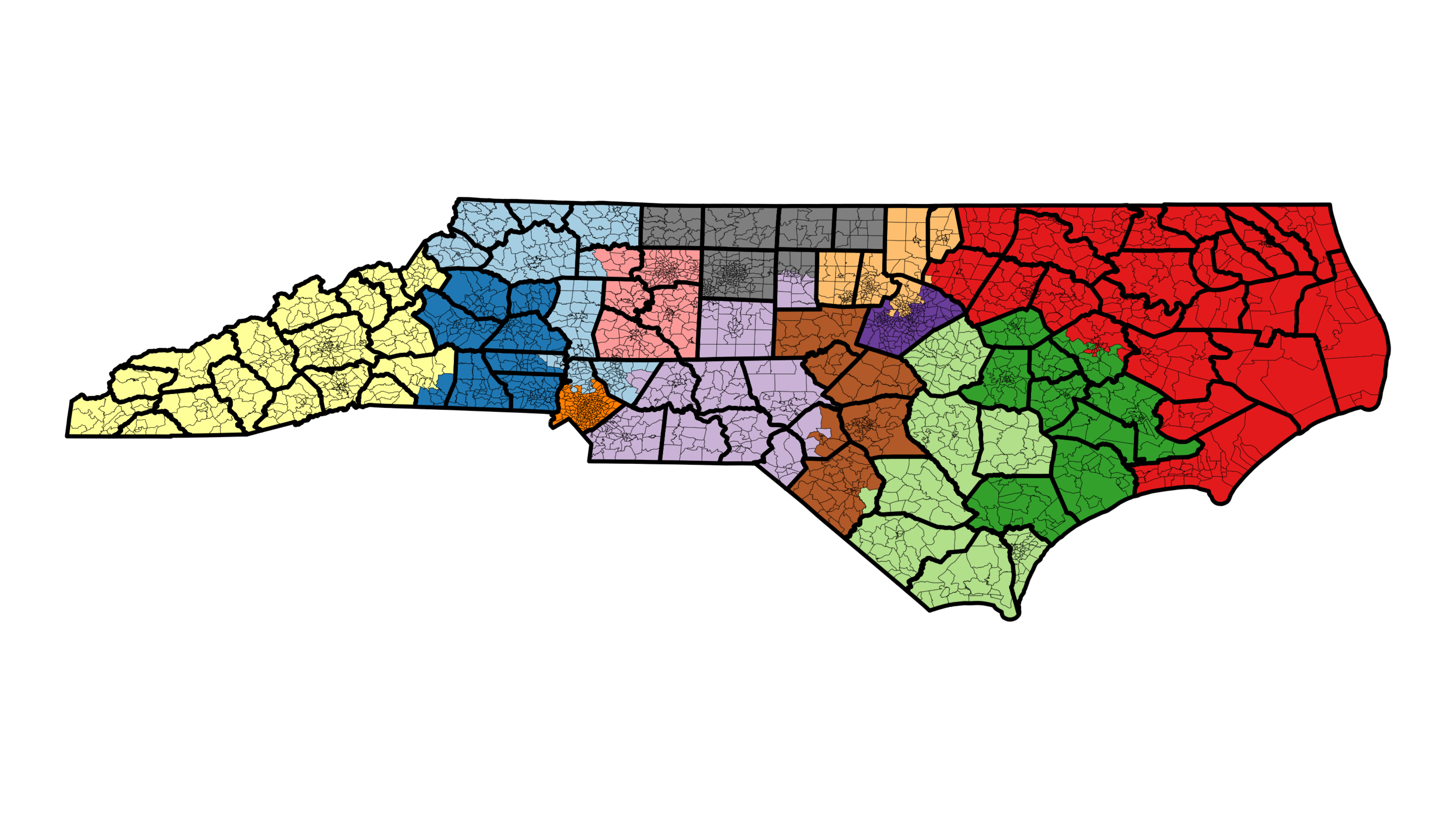}}
\caption{To illustrate how the multi-scale merge split chain may evolve, we show the initial condition of the 2016 congressional plan \ref{sfig:2016initchain} along with a realization of the  after two acceptances \ref{sfig:2accept}, four acceptances \ref{sfig:4accept} and 10,000 proposals (roughly 2,500 accepted steps) \ref{sfig:10Kprop}.}
\label{fig:chain}
\end{figure}

For each sample we count the number of Democratic and Republican votes within each district according to the 2012 presidential vote counts.  We then examine the fraction of the Republican vote compared to the total Democratic and Republican votes.  For each plan in the ensemble, we count the number of districts that have more Democratic votes than Republican votes, which serves as a proxy for the number of Democratic congressional delegates that would have been elected under these votes.  We construct a histogram of elected Democrats for each chain, and then examine the total variation between the histograms across any two chains.  We report the largest total variation in the histograms of all chains; we examine (i) this maximal pairwise variation as a function of the number of proposals, (ii) the histograms between the two chains with the highest variation, and (iii) the histogram produced over all chains (see Figure~\ref{fig:hists}). We find that after $8\times 10^5$ proposals, the largest total variation between chains is 3.56\% and that the chains are converging with an order of $0.56$.  We also find that the largest total variation between any given chain and all chains is 2.46\%.  In short, we find evidence that all 10 chains are converging to a stationary distribution.

\begin{figure}
\centering
\includegraphics[height=4.5cm]{./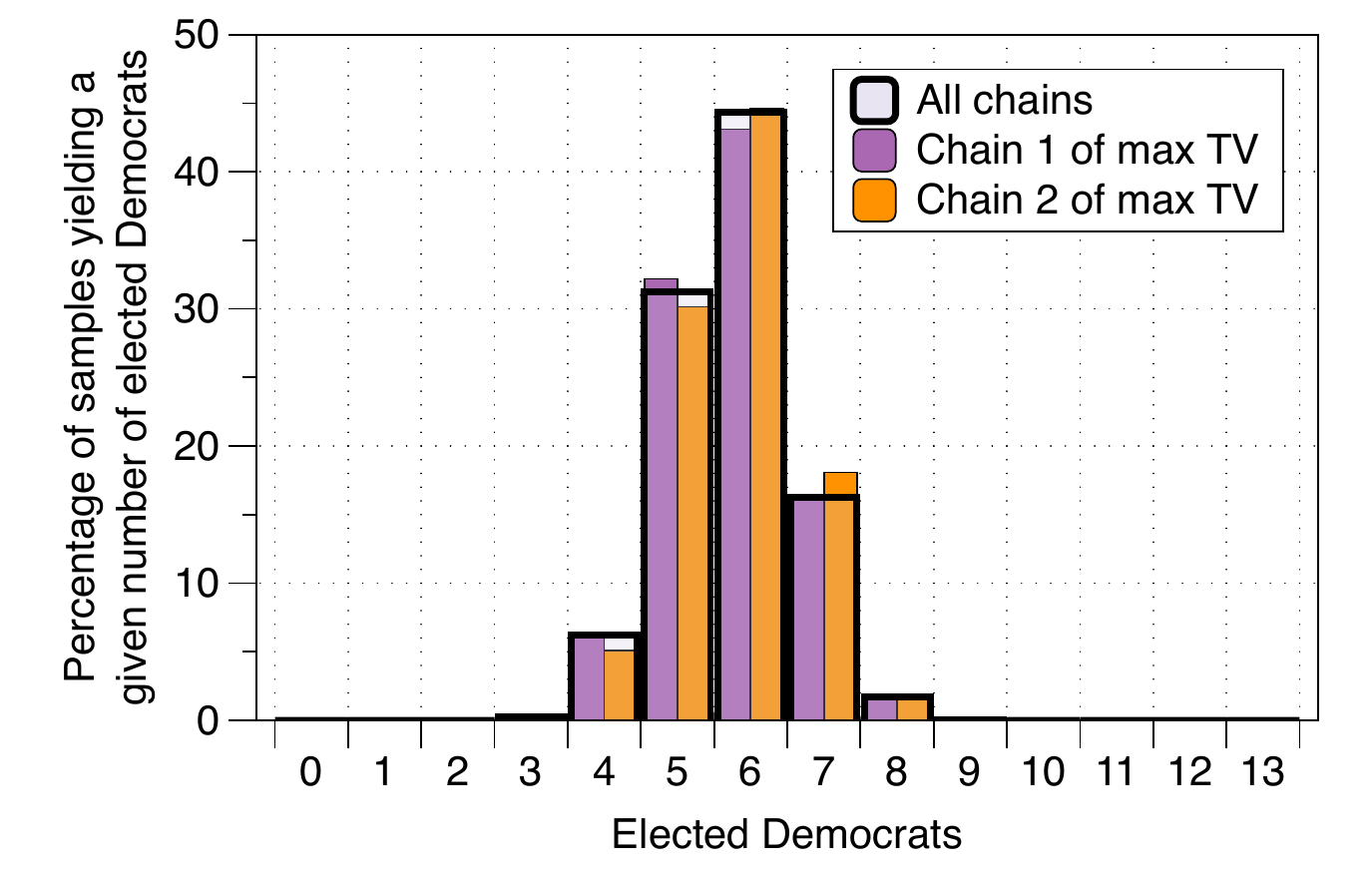}\qquad
\includegraphics[height=4.5cm]{./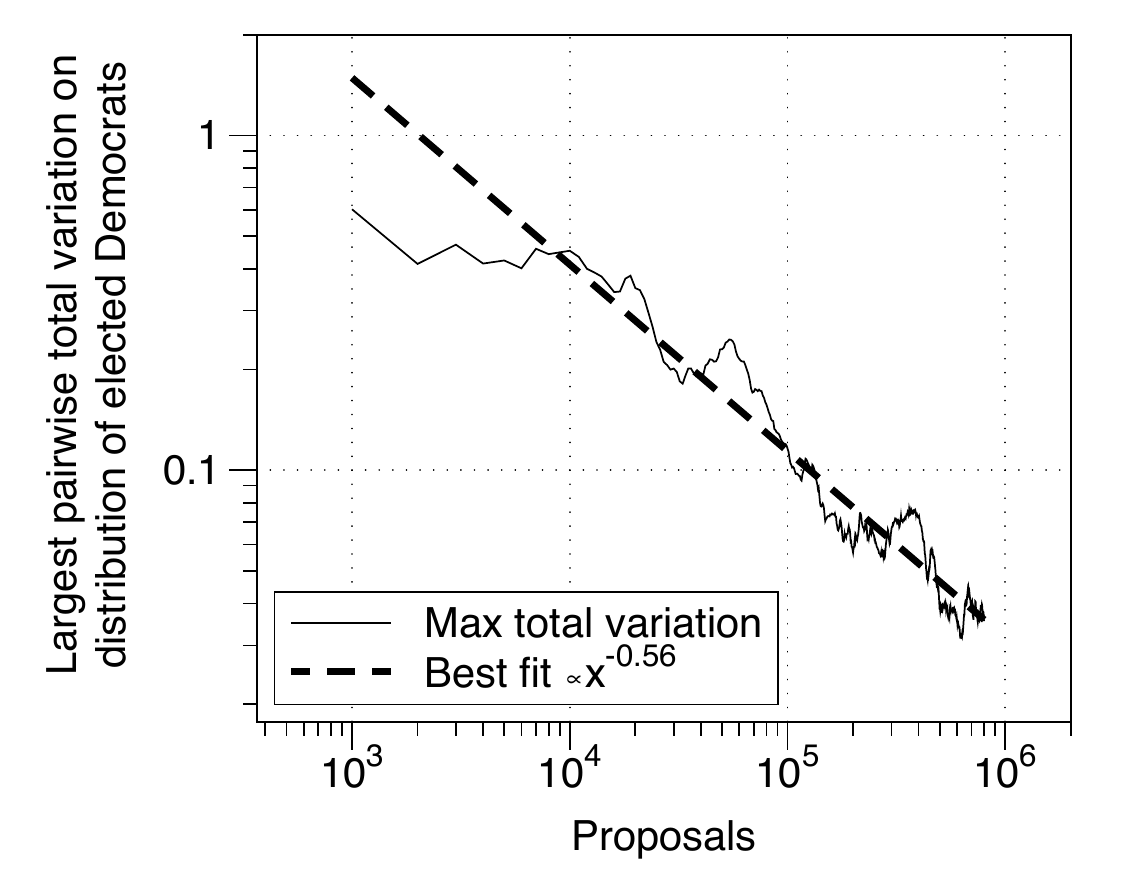}
\caption{We plot the histograms of the number of elected Democrats under the 2012 Presidential vote. We plot the histograms of the two chains with the largest total variation between them and overlay the histogram determined over all chains (left).  We plot the largest pairwise total variation as a function of the number of proposals, and find and order of convergence of roughly one half (right).}
\label{fig:hists}
\end{figure}

We next order the thirteen districts from most to least Republican. We use the ensemble of plans to examine the marginal distributions of the most Republican plan, the second most Republican plan, and so on until we arrive at the most Democratic plan. To study the convergence of these order statistics, we generate histograms of bin size 0.2\% for each marginal ensemble within each chain as a function of the number of proposals.  
We then examine the total variation of all ordered marginal distributions between two chains, and then average these variations.
We report the pair of chains with largest average total variation as a function of the number of proposals.
We plot the largest variation as a function of the number of proposals as well as examine the resulting rank-ordered marginal histograms after 800,000 proposals in Figure~\ref{fig:marginals}.  We find an estimated rate of convergence of $0.42$.  Qualitatively, the marginal distributions between the chains are quite close to one another, however there still a few structural differences as can be seen in the marginal distribution of the most Democratic district.  Nevertheless, we see evidence that the chains are converging to the same distribution. 

\begin{figure}
\centering
\includegraphics[height=4.5cm]{./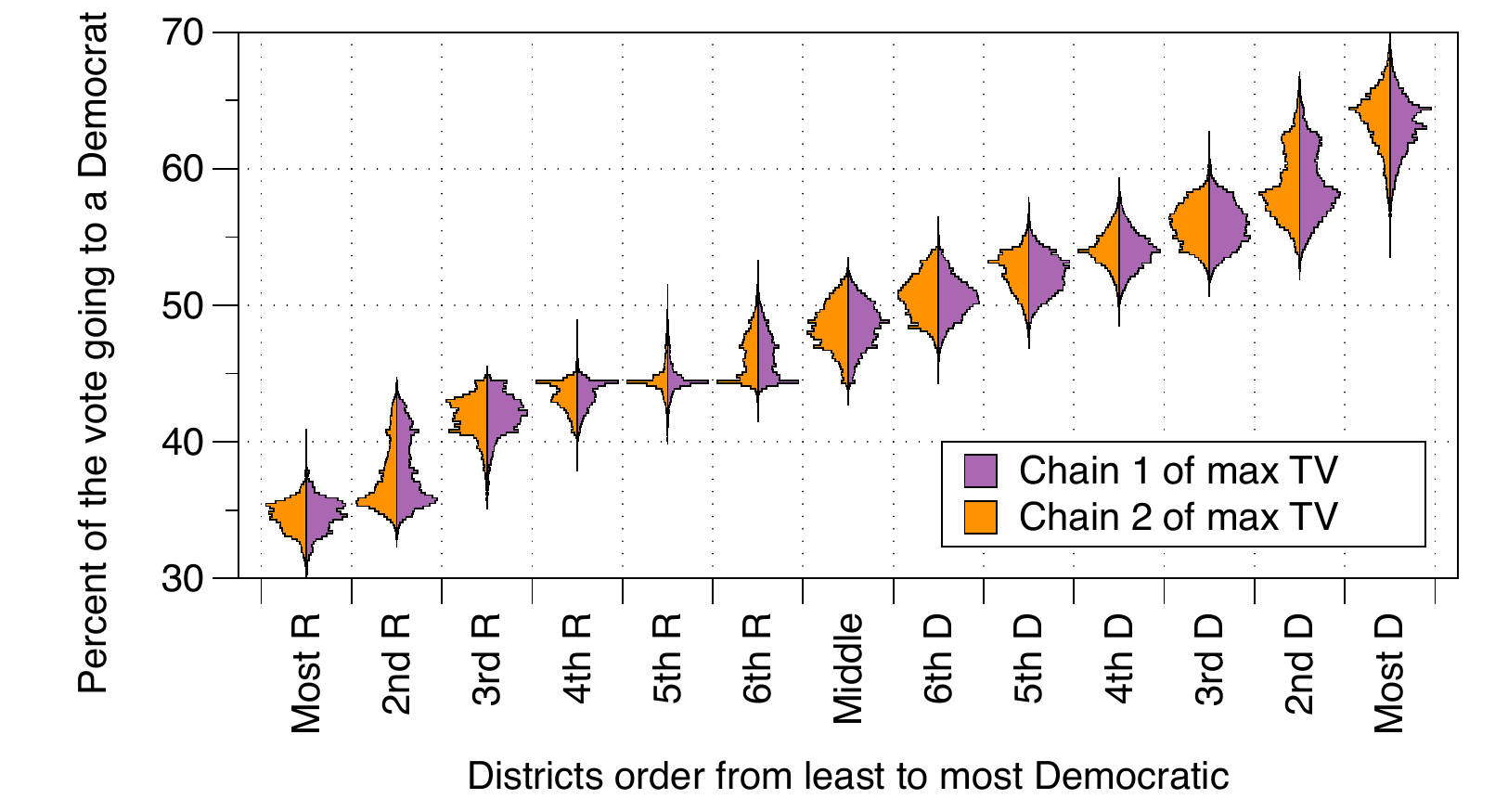}\qquad
\includegraphics[height=4.5cm]{./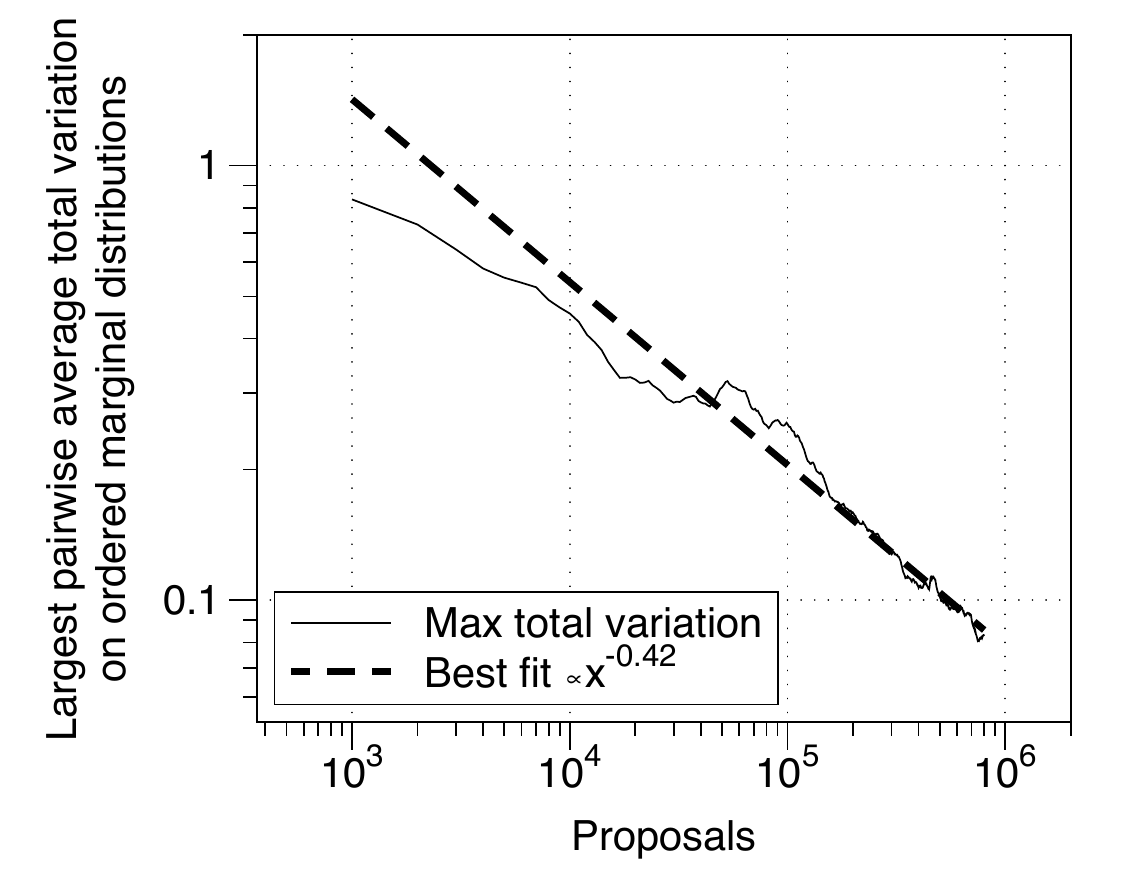}
\caption{We plot the rank-ordered marginal histograms of the percentage of the Democratic vote in each district under the 2012 Presidential vote. The districts are ordered from the most Republican (Most R) to most Democratic (Most D). We plot the marginal distributions from the two chains with the largest total variation averaged across the 13 districts after 800,000 proposals (left).  We plot the largest pairwise average total variation as a function of the number of proposals and find and order of convergence of roughly one $0.42$ (right).}
\label{fig:marginals}
\end{figure}

\section{Discussion}
We have developed a Multi-Scale Merge-Split algorithm to sample the space of redistricting plans. This algorithm builds on our pervious Merge-Split algorithm which uses spanning trees rather than hierarchical trees \cite{carter2019merge}. In turn, Merge-Split builds on the ReCom algorithm \cite{deford2019recombination} by using a tree based variant of ReCom as its proposal. The algorithm reduces the computational complexity of counting trees on district graphs by drawing and counting hierarchical trees rather than spanning trees. Given a graph with $N$ nodes, counting the number of hierarchical trees in a multi-scale setting has a complexity of $O(\log(N))$ compared to counting spanning trees which has a complexity of $O(N^{2.373})$.

The algorithm may use a specified hierarchy to preserve geographic regions of interests such as counties and precincts, or may use a dynamic hierarchy which will retain the scaling benefits while sampling a less constrained space of redistricting plans. In this initial report, we have demonstrated our algorithm on a hierarchy of precincts and counties and have demonstrated promising convergence results. We have not yet implemented dynamic hierarchies nor have we gone down to the census block level, but we expect this method will perform well in both of these cases, and plan to explore this in a future update.

In addition to the implementations we have discussed, there are a number of ways to extend the ideas in this work. For instance, we have only considered situations in which coarse nodes may only be split into two districts, however this method may be easily extended to cases in which nodes are split across more than two districts.  We have also only considered districting plans that share at most one node at any given level, however it should be possible to extend this algorithm to allow for multiple split coarse nodes across two districts.  

There are also open questions about mixing. For instance, what (if any) constraints will not allow the multi-scale merge split algorithm to mix? Perhaps there are graphs and county splitting constraints that will prevent the algorithm from being ergodic. Despite these open questions, the multi-scale algorithm promises to sample on finely resolved redistricting plans, perhaps even down to the census block level, while sampling from a known invariant measure.

\subsection*{Acknowledgements:} This work started as part of the ``Discovering Research Mathematic'' program between Duke Math and NCSSM in which Dan Teague played a critical role.  JCM and GJH thank the workshop ``Redistricting 2020'' in March of 2020 sponsored by the Duke NSF TRIPODS grant (NSF-CFF-1934964) and the Duke Rhodes Information Initiative. JCM and GJH thank Andrea Agazzi for useful discussion.  JCM also thanks the NSF grant 1613337 for partial support.

\bibliography{./biblio}
\bibliographystyle{alpha}

\appendix

\section{Dynamic Hierarchies}
\label{apdx:DynamicHierarchies}
In the current work we have focused on and numerically tested hierarchies on existing geographic units such as precincts embedded in counties. However, the multi-scale merge split algorithm need not be confined to prescribed hierarchies and may instead apply to more general and less constrained redistricting problems.

In this section we add a brief discussion of how this would work. First, one would decide on a number of hierarchies, $\ell$, on a base graph with $N$ nodes. This would yield, on average, $m = \sqrt[\ell]{N}$ nodes per partition within the quotient graphs. One could then initialize $m$ partitions on the fine scale graph by drawing a tree on the base graph and looking to cut it to separate out $m$ vertices (or alternatively a given fraction of the population), and then recurse with the remaining part of the graph that has not yet been assigned to a partition. Such a procedure would generate some randomized hierarchical graph.

If we were to sample on this fixed hierarchy, our sampling space would be constrained by which partitions could be simultaneously cut. We may, however, dynamically alter the graph hierarchy. To do this, we expand the measure to be a joint distribution on the hierarchy and the expanded districting (trees and liked edges). The form of the distribution is precisely the same:
\begin{align}
P(T, L, \mathcal{H}) \propto e^{-\beta J(\xi(\allT))} \big(\tau_\mathcal{H}(\xi(\allT))\times \mathcal{L}_{\mathcal{H}}(\xi(\allT))\big)^{-\gamma},
\end{align}
where we have now made it explicit that the number of linked edges depends on the hierarchy.

We then have two types of proposals -- one to evolve the districts on a fixed hierarchy and another to evolve the hierarchy on fixed districts. We have already developed and tested the first in the main text. The second would work as follow:
\begin{enumerate}
\item Choose an arbitrary level, $l$, of the hierarchy (larger than the base level). 
\item Choose an arbitrary pair of adjacent nodes, $u$ and $v$, at level $l$ that are (i) wholly contained in the same district, (ii) that are connected by an edge one of the trees, $T_i$, in the forest $T$, and (iii) belong to the same partition at level $l+1$.
\item If not already resolved, resolve the spanning trees within the nodes at the next finer level, $l-1$, by drawing a uniform spanning tree on the multi-graph of the this level ($l-1$). Call these trees $t_u$ and $t_v$, and the edge linking them $e_{(u,v)}$.
\item Draw a new uniform spanning tree on the finer graph induced by $u$ and $v$ (denoted $\mathcal{Q}^{-1}(u\cup v)$). Look for edges (and only edges) that could be removed on the merged tree such that the two resulting trees would contain an acceptable number of nodes. If there are no such edges, reject the move.
\item Pick one of the edges to remove, $e_{(u',v')}$, and form two new trees, $t_u'$ and $t_v'$ at level $l-1$, which will correspond to two new nodes, $u'$ and $v'$ at level $l$.
\item Use the removed edge as the new edge which connects the updated tree, $T_i'$. Note that the district has not been altered even though the tree representing the district has been altered, i.e. $\xi(T_i') = \xi(T_i)$. Note also that the edges linking the altered nodes to the unaltered nodes remain in place and that the overall topology of the new district tree $T_i'$ may change.
\item Compute the probability of proposing the new nodes and hierarchy along with the reverse probability. Use this to compute the acceptance ratio.
\end{enumerate} 

This algorithm operates almost identically to the original Merge-Split algorithm, but the measure is modified, and we are merging and splitting on sub-elements of the district trees, rather than on a pair of district trees.

The forward and backward proposal probabilities are computed similarly to the Merge-Split algorithm as
\begin{align}
Q((T, L, \mathcal{H}), (T', L, \mathcal{H}')) = P(\text{level $l$}) P(u,v | l, T) \frac{1}{\tau(\mathcal{Q}^{-1}(u\cup v))} P_cut(e_{(u',v')} | T_{(t_u', t_v', e_{(u',v')})}),
\end{align}
where $P(\text{level $l$})$ is the chance of choosing level $l$, and $P(u,v | l, T)$ is the chance of picking two nodes $u$ and $v$ at level $l$ that are linked through $T$, and within the same larger partition (if $l<\ell$). The third fraction on the right hand side is the number of spanning trees on the finer level graph induced by the two partitions given by $u$ and $v$. The final term is the probability of cutting a given edge given the uniformly drawn spanning tree $T_{(t_u', t_v', e_{(u',v')})}$.

If we assume that the probability of picking a certain level is independent of the state, and that the probability of picking a two adjacent nodes linked through $T$ is uniform, then the ratio of proposal probabilities simplifies to 
\begin{align}
\frac{Q((T, L, \mathcal{H}), (T', L, \mathcal{H}'))}{Q((T', L, \mathcal{H}'), (T, L, \mathcal{H}))} = \frac{P_cut(e_{(u',v')} | T_{(t_u', t_v', e_{(u',v')})})}{P_cut(e_{(u,v)} | T_{(t_u, t_v, e_{(u,v)})})},
\end{align}
since the number of adjacent nodes that we can choose from remain fixed between $T$ and $T'$.

To complete the algorithm, we must compute the acceptance ratio, which is given by 
\begin{align} 
A((T, L, \mathcal{H}), (T', L, \mathcal{H}')) = \min\Bigg(1, \frac{Q((T', L, \mathcal{H}'), (T, L, \mathcal{H}))}{Q((T, L, \mathcal{H}), (T', L, \mathcal{H}'))} \frac{P(T', L, \mathcal{H}')}{P(T, L, \mathcal{H})}\Bigg).
\end{align} 
The probability ratio simplifies to
\begin{align}
\frac{P(T', L, \mathcal{H}')}{P(T, L, \mathcal{H})} = \frac{\tau_\mathcal{H'}(\xi(\allT'))^{-\gamma}}{\tau_\mathcal{H}(\xi(\allT))^{-\gamma}} = \Bigg(
\frac{
\tau(\mathcal{Q}^{-1}_{\mathcal{H'}}(u')) \tau(\mathcal{Q}^{-1}_{\mathcal{H'}}(v')) \tau(\mathcal{Q}^{-1}_{\mathcal{H'}}(\mathcal{Q}_{\mathcal{H}}(u))) 
}{
\tau(\mathcal{Q}_{\mathcal{H}}^{-1}(u')) \tau(\mathcal{Q}_{\mathcal{H}}^{-1}(v')) \tau(\mathcal{Q}^{-1}_{\mathcal{H}}(\mathcal{Q}(u)))
}
\Bigg)^{-\gamma},
\end{align}
where $\mathcal{Q}$ is the quotient (coarsening) operator and $\mathcal{Q}^{-1}$ expands a coarse graph to the finer level of the hierarchy; in this case we specify which hierarchy the quotient operator is associated with by adding the subscripts $\mathcal{H}$ or $\mathcal{H'}$.

\section{Potential computational complexities with a fixed number of linking edges}
\label{apdx:complxLinkEdges}
As mentioned in Section~\ref{ssec:linkingEdgeSets}, when a linking edge is strictly contained within a partition at the coarsest level (i.e. a single node in $H_{\ell}$), any edge linking the two corresponding districts will also be confined to this partition. At times we may even require that linking edges are confined to coarse partitions (see Section~\ref{sssec:strictLinkedEdges}).  Counting the number of linking edge sets is straight forward in this case as (i) there must be a linked edge between the two districts sharing a coarse node and (ii) that linked edge must be entirely contained within the shared coarse node (see \eqref{eq:linkededgecount} and \eqref{eq:linkededgecountFix} in Section~\ref{ssec:linkingEdgeSets}). 

Suppose that we wish to prescribe a fixed number of linked edges, as in Section~\ref{sssec:strictLinkedEdges}, but we wish to allow edges to span two nodes at the coarsest level (in $H_{\ell}$).  In computing total number of possible linking edge sets that corresponds to this state, we would have to consider edges linking coarse nodes across \emph{any} adjacent districts that are not split across a coarse node.  Dealing with one such global linking edge is difficult, but dealing with multiple quickly becomes intractable. 

For example, consider the 2016 congressional map showing in Figure~\ref{sfig:2016initchain} containing 13 split counties.  Consider a linking edge scheme that fixes more than 14 linking edges. Thirteen of the linking edges will reside entirely within split counties, but remaining edges are far less constrained:  We could add one that links the dark brown to dark blue western districts, one that links the central light red district to the light green district, or one that links the eastern light blue district to the light orange district.  

If we were only adding one more linking edge, we would have to count the number of possible links we could make, but two more edges become more challenging.  In counting adding the number of ways two add two edges, we would have to count the number of ways to make the second link given all possible choices for the first link.  Given three more linking edges, we would count the ways to add a third edge for all possible choices of choosing the first two edges.  In all cases, we would have to avoid counting linking edge sets more than once.

This combinatoric complexity is what makes it more difficult to count fixed-size linking edge sets associated with a partition when the number of linking edges exceeds the number of split counties.  In the numerical example we have examined in this paper, we set $\gamma=0$ which enabled us to avoid counting possible linking edge sets and were therefore free to have these linking edges span counties (i.e. coarse nodes).

\end{document}